\tikzset{    
    mypoint/.style={
        circle,
        draw,
        inner sep=.3mm
        },  
    whitepoint/.style={
        fill=white, 
        mypoint
        },  
    blackpoint/.style={
        fill=black, 
        mypoint
        },  
    redpoint/.style={
        draw=red,
        fill=red, 
        mypoint
        },  
    redwhitepoint/.style={
        draw=red,
        fill=white, 
        mypoint
        },  
    textnode/.style={
        text height=2.5ex, 
        text depth=1ex
        },  
    }
\newcommand{\Alg}{\mathcal{A}}
\newcommand{\AlgL}{{\mathcal{A}_\Lambda}}
\newcommand{\dL}{\partial_\Lambda}
\newcommand{\deps}{\partial^\epsilon}
\newcommand{\id}{\mathrm{id}}
\newcommand{\Leg}{\Lambda}
\newcommand{\std}{\mathrm{std}}
\DeclareMathOperator{\im}{im}
\DeclareMathOperator{\LCH}{LCH}
\DeclareMathOperator{\nul}{null}
\DeclareMathOperator{\rank}{rank}
\DeclareMathOperator{\rot}{rot}
\newcommand{\R}{\mathbb{R}}
\newcommand{\Z}{\mathbb{Z}}
\newenvironment{subsubfigure}[2][]{%
  \begin{subfigure}[#1]{#2}%
    \stepcounter{subsubfigure}%
}{%
    \addtocounter{subfigure}{-1}%
  \end{subfigure}%
}
\newcounter{subsubfigure}
\newtheorem{theorem}{Theorem}[section]
\newtheorem{proposition}[theorem]{Proposition}
\newtheorem{lemma}[theorem]{Lemma}
\theoremstyle{definition}
\newtheorem{definition}{Definition}
\newtheorem{example}[theorem]{Example}
\newtheorem{remark}[theorem]{Remark}
\newtheorem{question}{Question}[section]
\begin{document}

\title[Persistent Legendrian Contact Homology in $\R^3$]{Persistent Legendrian Contact Homology in $\R^3$}

\author[Basu, Christian, Clayton, Irvine, Mooers, and Shen]{
	Maya Basu
	\and
	Austin Christian
	\and
	Ethan Clayton
	\and
	Daniel Irvine
	\and
	Fredrick Mooers
	\and
	Weizhe Shen
}

\maketitle

\begin{abstract}
This work applies the ideas of persistent homology to the problem of distinguishing Legendrian knots. We develop a persistent version of Legendrian contact homology by filtering the Chekanov-Eliashberg DGA using the action (height) functional. We present an algorithm for assigning heights to a Lagrangian diagram of a Legendrian knot, and we explain how each Legendrian Reidemeister move changes the height of generators of the DGA in a way that is predictable on the level of homology. More precisely, a Reidemeister move that changes an area patch of a Lagrangian diagram by $\delta$ will induce a $2\delta$-interleaving on the persistent Legendrian contact homology, computed before and after the Reidemeister move. Finally, we develop strong Morse inequalities for our persistent Legendrian contact homology.
\end{abstract}

\section{Introduction}\label{sec:intro}

This work presents a new lens through which we view the problem of classifying Legendrian knots.

A Legendrian knot is a smooth embedding $S^1\to \mathbb{R}^3$ that is everywhere tangent to the contact distribution given by the standard contact form, 
\[ \alpha_{std} = dz - y\, dx.\]
More precisely, if a knot $\Lambda$ is parametrized as $\Lambda(t) = (x(t),y(t),z(t))$ for $t\in[0,2\pi]$, then the Legendrian condition asks that
\begin{equation}\label{eq:LegCondition} z'(t) - y(t)\, x'(t) = 0.\end{equation}
By imposing this geometrical constraint on a knot, we are led to consider a rich problem of classifying knots up to isotopies which preserve their topological and geometrical properties. This, broadly, is the classification problem in Legendrian knot theory.

A Legendrian knot properly exists in $\mathbb{R}^3$, but it is often convenient to work with a two-dimensional projection of the knot. There are two useful ways of projecting a Legendrian knot into the plane. The Lagrangian projection is 
\[ \Pi \colon \mathbb{R}^3\to \mathbb{R}^2, \qquad \text{ given by } \qquad (x,y,z) \mapsto (x,y). \]
The Lagrangian projection operator, when restricted to a Legendrian knot $\Lambda$, gives a particular \textit{Lagrangian diagram} of $\Lambda$. More precisely, a Lagrangian diagram is a knot diagram that is a Lagrangian projection of a Legendrian knot, modulo all possible planar isotopies.
(See Section~\ref{sec:legs}.) An example Lagrangian diagram is shown in Figure~\ref{fig:toyunknot(a)}. The other important projection is the front projection operator.
\[ F \colon \mathbb{R}^3\to \mathbb{R}^2, \qquad \text{ given by } \qquad (x,y,z) \mapsto (x,z). \]

Legendrian contact homology (abbreviated LCH) is a powerful tool to use on the classification problem. The survey \cite{etnyre2020legendrian} explains how to compute the Legendrian contact homology, $\LCH_*(\Lambda)$, given a Lagrangian diagram for $\Lambda$.  In this introduction, we elide the details of how to use \emph{augmentations} of $\LCH_*(\Lambda)$ to extract useful information, but an explanation is given in Section~\ref{linearize}.

  The main utility of LCH comes in distinguishing Legendrian knots. We describe here an example application, in the broadest possible terms. Given two Legendrian knots $\Lambda_1$ and $\Lambda_2$, one could compute the so-called Poincar\'e-Chekanov polynomials $PC^{\epsilon}$ for $\Lambda_1$ and $\Lambda_2$  with respect to all possible augmentations of each (see Theorem~\ref{thm:linearized-invariant} and Formula~\ref{PCpoly}). If one arrives at two distinct collections of polynomials, then one can conclude that $\Lambda_1$ and $\Lambda_2$ are distinct as Legendrian knots. 

The substance of this work is to introduce ideas of persistent homology (abbreviated persistence) into the computation of the Legendrian contact homology of a knot. True to its name, persistent homology measures the geometro-topological features that persist throughout the many spaces that make up a filtered knot. Given a knot $\Lambda$, a filtration is an increasing family of subsets, $\{\Lambda^a\}_{a\in \mathbb{R}}$, such that $\Lambda^a \subseteq \Lambda^b$, whenever $a \leq b$, and $\Lambda^a=\Lambda$ for sufficiently large $a$. While this definition is useful for intuition, in this paper we will not examine a filtration on a Legendrian knot; instead, we will define a filtration on the chain level. A persistence module of chain complexes is a functor from the partially-ordered set of real numbers, $(\mathbb{R},\leq)$ treated as a poset category, to the category of chain complexes of modules over a fixed ground ring. A more detailed definition will be given in Section~\ref{background:pmods}.

Legendrian contact homology is the homology of a differential graded algebra referred to in the literature as the \emph{Chekanov-Eliashberg DGA}.  The Chekanov-Eliashberg DGA is naturally filtered by the Reeb action of each generator. This will be the filtration for our persistence module structure. We adopt the following sign convention. Each generator $q$ of the Chekanov-Eliashberg DGA has a geometric realization as a crossing in a Lagrangian projection. Since the Legendrian knot $\Lambda$ exists in contact $\mathbb{R}^3$, we may lift the crossing from the Lagrangian projection $\Pi(\Lambda)$ to a pair of points $\tilde{c}\in \Lambda$ and define
\[ \text{action}(q) = \int_{\tilde{c}} dz. \]
A chain level filtration will turn the Chekanov-Eliashberg DGA into a persistence module with a differential operator, as defined in \cite{polterovic2017persistence}. The Chekanov-Eliashberg DGA already has a $\mathbb{Z}$-grading, called the Maslov grading, given by the degree of a word. Introducing a filtration adds an $\mathbb{R}$-grading. As a means of distinguishing these two gradings, we call $\text{action}(q)$ the \textit{height} of the element $q$. Henceforth we denote the height by $h(q)$. The evaluation of heights agrees with the algebra operations of the Chekanov-Eliashberg DGA;
\[ h(q_1q_2) = h(q_1)+h(q_2)  \qquad \text{ and } \qquad h(q_1 + q_2) = \max\{h(q_1),h(q_2)\}.\]

We now see that the problem of introducing persistence into Legendrian contact homology hinges on the assignment of a height to each generator of the DGA. In this paper we will explain why it is natural to express the LCH of a Legendrian knot as a persistence module and demonstrate a powerful tool for visualizing persistent LCH, called a barcode.

\begin{lemma}\label{lem:decomp}
    In this work, we are interested in the following two algebraic structures, both of which are persistence modules.
    \begin{enumerate}
        \item The Chekanov-Eliashberg DGA with a height filtration is an example of a persistence module of DGAs.
        \item The (linearized) homology of (i) is a persistence module with a real parameter, denoted $F^\bullet \LCH^\epsilon_\ast(\Leg)$.
    \end{enumerate}
\end{lemma}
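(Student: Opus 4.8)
The plan is to treat both claims as the (essentially bookkeeping) assertion that the height function is compatible with every piece of structure in play. Recall that $h$ is defined on the generators $q_1,\dots,q_n$ (the crossings of the Lagrangian diagram, of which there are finitely many) and extended by $h(q_{i_1}\cdots q_{i_k})=\sum_j h(q_{i_j})$ and $h(x+y)=\max\{h(x),h(y)\}$. Two structural facts are needed, both standard for the Chekanov--Eliashberg DGA and attributable to the action functional: \textbf{(A)} every generator satisfies $h(q)>0$ (Reeb chords of $\alpha_{\std}$ in the Lagrangian projection have positive action); and \textbf{(B)} the differential does not raise height, indeed $h(w)<h(q)$ for every word $w$ occurring in $\partial q$, because the rigid holomorphic disk contributing $w$ has symplectic area $h(q)-h(w)>0$. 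From (A) and the identities for $h$ one reads off that a word lies in the span of words all of whose letters have height $\le a$ exactly when each of its letters has height $\le a$; and from (A) together with (B), every letter of a word $w$ appearing in $\partial q$ has height $\le h(w)<h(q)$.

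For (1), set $\Alg^a$ equal to the subalgebra of $\Alg$ generated by $\{q_i : h(q_i)\le a\}$ together with the unit. It is closed under products by construction, and closed under $\partial$: on a generator $q$ with $h(q)\le a$ this is (B) plus the observation just made, and the general case follows since $\partial$ is a derivation and $\Alg^a$ is closed under products. The differential and the Maslov grading restrict, so $(\Alg^a,\partial)$ is a sub-DGA; for $a\le b$ we have $\Alg^a\subseteq\Alg^b$ with the inclusion a (degree-preserving) DGA morphism, and these inclusions compose on the nose, so $a\mapsto\Alg^a$ is a functor $(\R,\le)\to\mathbf{DGA}$. Finiteness of the crossing set gives $\Alg^a=\Alg$ for $a\ge\max_i h(q_i)$ and $\Alg^a$ equal to the ground ring for $a<\min_i h(q_i)$, so this functor is the asserted persistence module of DGAs. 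Equivalently one may filter $\Alg$ as a complex by $F^a\Alg:=\operatorname{span}\{w : h(w)\le a\}$ and regard the product as a family of maps $F^a\Alg\otimes F^b\Alg\to F^{a+b}\Alg$; this is the ``DGA with a differential operator'' viewpoint of \cite{polterovic2017persistence}, and (B) is exactly what makes each $F^a\Alg$ a subcomplex.

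For (2), fix an augmentation $\epsilon$. Restriction produces an augmentation $\epsilon|_{\Alg^a}$ of $\Alg^a$, and for $a\le b$ the inclusion $\Alg^a\hookrightarrow\Alg^b$ intertwines $\epsilon|_{\Alg^a}$ and $\epsilon|_{\Alg^b}$. Linearization (Section~\ref{linearize}, Theorem~\ref{thm:linearized-invariant}) is functorial for such augmentation-compatible DGA maps, and it carries $\Alg^a$ to the linearized complex $(V^a,\deps)$, where $V^a$ is spanned by the generators of height $\le a$; that $V^a$ is $\deps$-invariant is again (A)$+$(B), since $\deps q$ retains exactly one letter from each word of $\partial q$ and that letter has height $<h(q)\le a$. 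Passing to homology and setting $F^a\LCH^\epsilon_\ast(\Leg):=H_\ast(V^a,\deps)$, the inclusions induce maps $F^a\LCH^\epsilon_\ast(\Leg)\to F^b\LCH^\epsilon_\ast(\Leg)$ that compose correctly, so $F^\bullet\LCH^\epsilon_\ast(\Leg)$ is a functor $(\R,\le)\to\mathbf{Mod}$, i.e.\ a persistence module; it is moreover pointwise finite-dimensional because the crossing set is finite.

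The only inputs here that are not pure diagram chasing are (A) and (B), and among these the substantive one is (B): one must normalize the action so that the differential decreases it, which reduces to the sign convention for $\int dz$ over a capping arc and the direction of the Reeb flow. Granting the height conventions already recorded in the introduction, this is immediate, and it is the one place I expect care to be needed; the remaining verifications -- closure under products, restriction of augmentations, functoriality of linearization, and strict composability of the inclusions -- are routine.
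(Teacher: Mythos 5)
Your proof is correct and follows the same route the paper takes implicitly: the paper states this lemma without proof and relies on exactly your facts (A) and (B) --- positivity of Reeb chord actions and strict height-decrease of $\partial$ --- in the construction of Section~\ref{barcodegeneral} and in the proofs of Lemma~\ref{lem:semimonotonicity} and Theorem~\ref{thm:strong-morse-inequality}. The only nitpick is your phrase that $\deps q$ ``retains exactly one letter from each word of $\partial q$''; it retains one length-one term \emph{for each letter} of each word (weighted by $\epsilon$ of the others), but every surviving letter still has height strictly less than $h(q)$, so the argument stands.
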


All persistence modules of interest to us can be decomposed in the following way.

\begin{theorem}\label{thm:decomposition}
\emph{(A consequence of the structure theorem of Gabriel, Auslander, Ringel– Tachikawa, Webb, cf. \cite{chazal2016strcture})} The persistence modules in Lemma~\ref{lem:decomp} can be decomposed into a direct sum of interval modules.
\end{theorem}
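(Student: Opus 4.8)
The plan is to deduce both statements from the structure theorem already cited in the statement (\cite{chazal2016strcture}): every \emph{pointwise finite-dimensional} persistence module over $(\R,\leq)$ valued in $\Z/2$-vector spaces is isomorphic, as a persistence module, to a direct sum of interval modules $\bigoplus_j \I_{I_j}$. Thus the entire task is to realize each of the two objects of Lemma~\ref{lem:decomp} as a pointwise finite-dimensional persistence module of vector spaces, or a direct sum of such, after which the theorem applies verbatim; since the given isomorphism is one of persistence modules, the resulting decomposition is automatically compatible with all the structure maps $F^a\to F^b$.

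For the homology module $F^\bullet\LCH^\epsilon_\ast(\Leg)$ of part (2) this is immediate. The linearized complex has one generator for each crossing of the chosen Lagrangian diagram, so it is finite-dimensional in every Maslov degree $n$; hence $F^a\LCH^\epsilon_n(\Leg)$ is a finite-dimensional $\Z/2$-vector space for all $a\in\R$, and applying the structure theorem degree by degree finishes this case. (Because there are only finitely many crossings there are only finitely many heights, so each $F^\bullet\LCH^\epsilon_n(\Leg)$ is in fact \emph{constructible}, i.e.\ constant away from finitely many jump values; this is what lets us record it as a finite barcode later on.) For the DGA of part (1) there is one real subtlety: the free algebra $\AlgL$ is infinite-dimensional over $\Z/2$, and can be infinite-dimensional even in a single Maslov degree, since a degree-zero generator contributes a word of every length. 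I would resolve this by forgetting the multiplication, keeping only the underlying filtered chain complex, and refining the grading by word length, $\AlgL=\bigoplus_{\ell\geq 0}\AlgL^{\ell}$ (and one may further split off the Maslov degree). Each $\AlgL^{\ell}$ is finite-dimensional --- there are finitely many generators --- and the height filtration respects this grading because $h(q_1q_2)=h(q_1)+h(q_2)$ leaves word length unchanged, so each $F^\bullet\AlgL^{\ell}$ is a pointwise finite-dimensional persistence module of vector spaces and hence interval-decomposable. Since a direct sum of interval-decomposable persistence modules is again interval-decomposable, so is $F^\bullet\AlgL$.

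The main obstacle is exactly this infinite-dimensionality: without the word-length refinement the persistence module is not pointwise finite-dimensional and the structure theorem does not apply directly. It is worth emphasizing in the write-up that an interval module is an invariant of the persistence-module-of-vector-spaces structure alone, so the decomposition is \emph{not} a decomposition of DGAs --- the interval summands need not be closed under the multiplication, nor even under the differential $\dL$ --- which is precisely why the statement is cleanest at the level of the underlying persistence module of chain complexes. The only remaining points are routine verifications: that each inclusion $F^a\hookrightarrow F^b$ is a chain map, which is immediate from $h(\dL q)\leq h(q)$ (part of the persistence-module-with-differential setup of \cite{polterovic2017persistence}), so that we genuinely have a persistence module of complexes; and that homology commutes with the countable direct sum over $\ell$, so that passing between the chain-level statement (1) and the homology-level statement (2) introduces nothing new.
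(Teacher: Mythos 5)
Your proposal is correct and follows the same route as the paper, which offers no argument beyond the citation of the structure theorem: the entire content is verifying that the cited theorem applies, i.e.\ pointwise finite-dimensionality, which you check degree by degree for $F^\bullet \LCH^\epsilon_\ast(\Leg)$ exactly as the paper's setup intends. Your word-length refinement for part (1) is a genuine and necessary addition that the paper silently elides: the filtration levels of the Chekanov--Eliashberg DGA, read as the subalgebras generated by the low-height crossings, are infinite-dimensional, so the structure theorem does not apply to them directly, and splitting the underlying persistence module of vector spaces into its (finite-dimensional, filtration-compatible) word-length summands before invoking the theorem is the right fix.
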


An interval module is the simplest example of a persistence module, and this is explained in Example~\ref{ex:interval}, below. We use this structure theorem every time that we compute persistent homology. 

There is a pseudo-metric on the class (category) of persistence modules of Theorem~\ref{thm:decomposition}, called the \emph{interleaving distance}. The interleaving distance will be defined in Section~\ref{sec:nearby}, but for now, let us denote the distance function by
\[ d\colon(\text{persistence modules})^{\times 2} \to \mathbb{R}. \]

The ultimate goal of this investigation is to explain the extent to which the persistent Legendrian contact homology is an invariant of a Lagrangian projection diagram of a Legendrian knot $\Lambda$. In order to do this, we must show that applying any reasonable Legendrian Reidemeister move to a Lagrangian diagram for $\Lambda$ will change the persistent LCH by a small amount, with respect to this interleaving distance.

\begin{theorem}\label{thm:main-invariance-result}
Let $\Lambda_0$ and $\Lambda_1$ be Legendrian knots whose Lagrangian projections $\Pi(\Lambda_0)$ and $\Pi(\Lambda_1)$ are related by a single Legendrian Reidemeister move, and fix $\delta>0$.  There exist Legendrian knots $\tilde{\Lambda_0}$ and $\tilde{\Lambda_1}$ whose Lagrangian projections are planar isotopic to those of $\Lambda_0$ and $\Lambda_1$, respectively, such that
\[
d(F^\bullet \LCH^\epsilon_\ast(\tilde{\Lambda_0})\, , \, F^\bullet \LCH^\epsilon_\ast(\tilde{\Lambda_1})) < \delta.
\]
\end{theorem}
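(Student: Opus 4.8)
The plan is to localize the problem at the single Reidemeister move and then to shrink the region in which it takes place. Recall from Section~\ref{sec:legs} that a Legendrian Reidemeister move of a Lagrangian diagram belongs to one of two families --- a triple point move or a (safe) self-tangency move; there is no Reidemeister~I move for Lagrangian projections. In either case the move is supported in a disk $D\subseteq\R^2$: the diagrams $\Pi(\Lambda_0)$ and $\Pi(\Lambda_1)$ agree outside $D$, while inside $D$ they differ by the local model. The move creates or sweeps a single complementary region --- a triangle for the triple point move, a bigon for the self-tangency move --- and I will call the area of that region the \emph{area patch} of the move.

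First I would arrange the area patch to be arbitrarily small. Given $\delta>0$, fix $\alpha$ with $0<\alpha<\delta/2$. A planar isotopy supported near $D$ that rescales the local configuration replaces $\Pi(\Lambda_0)$ and $\Pi(\Lambda_1)$ by planar-isotopic diagrams, still related by the same move but now with area patch less than $\alpha$; since this isotopy is performed identically on the two diagrams outside $D$, the new diagrams are genuinely related by that one move. By the correspondence between planar isotopy of Lagrangian diagrams and Legendrian isotopy recorded in Section~\ref{sec:legs}, these are the Lagrangian projections of Legendrian knots $\tilde\Lambda_0$ and $\tilde\Lambda_1$ with $\Pi(\tilde\Lambda_i)$ planar-isotopic to $\Pi(\Lambda_i)$. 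Fix an augmentation $\epsilon$ of $\AlgL(\tilde\Lambda_0)$; the chain maps below carry it to a corresponding augmentation of $\AlgL(\tilde\Lambda_1)$, so that $F^\bullet\LCH^\epsilon_\ast$ is defined on both sides (these are persistence modules by Lemma~\ref{lem:decomp}) and the two are comparable.

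The heart of the proof is then the claim that the move induces a $2\alpha$-interleaving. The generators of $\AlgL$ are the crossings of the diagram, and by Stokes' theorem applied to $dz=y\,dx$ along $\Leg$ --- exactly as in the definition of $h$ --- the height of a crossing is a signed sum of areas of complementary regions. The move alters only the regions meeting $D$, and only by at most the area patch, so every crossing present both before and after has its height changed by at most $\alpha$, and the two crossings created by a self-tangency move have height at most $\alpha$. For the triple point move the crossings biject, and the usual invariance argument (\cite{etnyre2020legendrian}) produces a DGA isomorphism $\phi\colon\AlgL(\tilde\Lambda_0)\to\AlgL(\tilde\Lambda_1)$ equal to the identity away from the crossings bordering the triangle and differing from it there only by correction words of height at most $\alpha$ above; combined with the shift of the crossings' own heights, $\phi$ and $\phi^{-1}$ each raise heights by at most $2\alpha$, hence restrict to filtered maps $F^a\AlgL(\tilde\Lambda_0)\to F^{a+2\alpha}\AlgL(\tilde\Lambda_1)$ and back. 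For the self-tangency move the same analysis exhibits $\AlgL(\tilde\Lambda_1)$, after an algebraic change of variables, as a stabilization of $\AlgL(\tilde\Lambda_0)$ by an acyclic two-generator complex whose generators have height at most $\alpha$; the stabilization map, a destabilization quasi-inverse, and the chain homotopies exhibiting the equivalence all shift the height filtration by at most $2\alpha$, picking up at most $\alpha$ from the shift of the old heights and at most $\alpha$ from the contracting homotopy of the acyclic piece. In either case, linearizing by $\epsilon$ and passing to homology yields interleaving maps of size at most $2\alpha$, so
\[
d\bigl(F^\bullet\LCH^\epsilon_\ast(\tilde\Lambda_0),\,F^\bullet\LCH^\epsilon_\ast(\tilde\Lambda_1)\bigr)\;\le\;2\alpha\;<\;\delta,
\]
which is the assertion.

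The step I expect to be the main obstacle is the self-tangency move. Unlike the triple point move it changes the number of generators, so the comparison proceeds through a filtered homotopy equivalence rather than an isomorphism, and one must check that \emph{every} map in sight --- the quasi-isomorphism, its quasi-inverse, and the two chain homotopies witnessing it --- is filtered with shift bounded by a constant times the area patch. Concretely this amounts to confirming that the two new generators and the word $w$ appearing in the differential $\dL a'=\hat a$ of the ``long'' new crossing all have height at most $\alpha$ --- which is precisely what it means to perform the self-tangency \emph{safely} --- and then to tracking these bounds through the change of variables that puts the move in standard stabilization form. A secondary technical point is to verify that all of these filtered maps intertwine the chosen augmentation $\epsilon$, so that the interleaving produced is genuinely between the linearized persistent homologies $F^\bullet\LCH^\epsilon_\ast$ and not merely between the filtered Chekanov-Eliashberg algebras.
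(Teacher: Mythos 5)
Your overall architecture is the same as the paper's: shrink the area swept by the move, then track the induced stable tame isomorphism through the height filtration, treating the triple-point move as a filtered isomorphism and the self-tangency move as a filtered (de)stabilization; the paper packages this bookkeeping into Lemmas~\ref{lem:semimonotonicity} and~\ref{lem:sigma-2delta-interleaving} and Propositions~\ref{prop:2delta-interleaving} and~\ref{prop:stabilization-interleaving} and then verifies the hypotheses move by move. But there is a concrete error in your self-tangency case. You claim that ``the two crossings created by a self-tangency move have height at most $\alpha$.'' This is false: the height of a crossing is the length of its Reeb chord, i.e.\ the $z$-separation of the two strands at the crossing, which can be arbitrarily large no matter how small the bigon is. What Stokes' theorem controls is the \emph{difference} $h(a)-h(b)$, which equals the area of the bigon. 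Consequently your assertion that $\mathcal{A}_{\tilde{\Lambda}_1}$ is a stabilization ``by an acyclic two-generator complex whose generators have height at most $\alpha$'' does not hold. The statement you actually need is that the two cancelling generators have heights within $2\alpha$ of \emph{each other}, so that the finite bar they contribute at the chain level has length less than $2\alpha$ and can be deleted by an interleaving of the corresponding size; this is exactly Proposition~\ref{prop:stabilization-interleaving}. The conclusion survives, but your argument as written does not establish it.

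A second gap is the step where you produce $\tilde{\Lambda}_0$ and $\tilde{\Lambda}_1$ by ``rescaling the local configuration'' in the disk $D$ and then invoking a correspondence between planar isotopy and Legendrian isotopy. Section~\ref{sec:legs} warns that an arbitrary planar deformation of a Lagrangian diagram need not lift to a Legendrian knot: the deformed diagram must still enclose zero signed area and satisfy every area inequality, and shrinking the bigon or triangle necessarily changes the areas of the adjacent patches. One must therefore argue that valid height assignments exist before and after the move in which the surviving crossings' heights move by at most $\delta$ and the new or destroyed pair differ by at most $2\delta$ --- this is the content of Sections~\ref{sec:R2}--\ref{sec:R3a}, which lean on K\'alm\'an's realization results rather than on a bare rescaling. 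Your proposal correctly identifies the self-tangency move as the delicate case, but the delicacy lies in these two points (the height \emph{difference} versus absolute height, and the existence of lifts), not only in checking that the homotopy equivalence is filtered.
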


Let us discuss a bit of motivation for Theorem~\ref{thm:main-invariance-result}. Recall that $F^\bullet \LCH^\epsilon_\ast(\Leg)$ is a persistence module of a real parameter. After applying Theorem~\ref{thm:decomposition}, we write $F^\bullet \LCH^\epsilon_\ast(\Leg)$ as a direct sum of interval modules, which we call a barcode. The interleaving distance measures the infimum over all $\delta$ for which a $2\delta$ interleaving exists between two barcodes.  A working definition for a $2\delta$-interleaving of a barcode is as follows. One is allowed to move the endpoints of any bar left or right by a distance of at most $\delta$. Consequently, one may delete any bar of finite length less than $2\delta$. 

Using this working definition, one might draw a barcode of a Legendrian knot diagram and look for a bar (interval) of length $<2\delta$, for some pre-determined noise parameter $\delta$. By identifying the crossings that generate the endpoints of the bar of short length, one may then look for a Legendrian Reidemeister move to perform near those crossings which can simplify the knot and the barcode. This is a way in which a barcode could hope to distinguish essential information from noise.

\subsection*{Future directions}
The present article establishes the definition of persistent Legendrian contact homology and offers some basic tools for computing persistent LCH in practice.  But several interesting questions remain, and we plan to address some of these in future work.  Our hope is that persistent LCH will have much to tell us about the classification of Legendrian knots in $\mathbb{R}^3$.

Our first and most fundamental question is one of invariance.  While a fixed (generic) Legendrian knot in $\mathbb{R}^3$ has a well-defined persistent LCH, Legendrian knots are typically studied according to their Legendrian isotopy classes.  The invariance question thus becomes
\begin{question}
What collection of persistence modules can be realized as the persistent LCH of a fixed Legendrian isotopy type?
\end{question}
Because Legendrian contact homology is computed combinatorially from a Lagrangian diagram, we find it natural to decompose this question into two finer questions:
\begin{enumerate}
    \item How is persistent LCH affected by Legendrian isotopies which do not change the Lagrangian diagram?\label{q:planar}
    \item How is persistent LCH affected by Legendrian isotopies which induce Reidemeister moves on the Lagrangian diagram?\label{q:reidemeister}
\end{enumerate}
Theorem~\ref{thm:main-invariance-result} provides one answer to~\ref{q:reidemeister}: a Legendrian isotopy $\Lambda_t$, $t\in[t_0-\epsilon,t_0+\epsilon]\subsetneq[0,1]$, corresponding to a single Lagrangian Reidemeister move at $t=t_0$ may be extended to a Legendrian isotopy $\Lambda_t$, $t\in[0,1]$, whose Lagrangian projection is constant on each of $[0,t_0)$ and $(t_0,1]$ and whose endpoints $\Lambda_0,\Lambda_1$ have arbitrarily small interleaving distance between their persistent Legendrian contact homologies.

Our understanding of~\ref{q:planar} is also quite strong.  Namely, a planar isotopy of a Lagrangian projection cannot change LCH, and thus persistent LCH will change only by variations in the filtration --- that is to say, the endpoints of a bar in the persistent LCH barcode may change, but the number of bars in the barcode will not.  For a fixed Lagrangian diagram with $n$ crossings, the collection of valid filtrations forms an open cone in the first orthant of $\mathbb{R}^n$, and Legendrian isotopies which leave the Lagrangian diagram unchanged correspond to paths in this cone.  It is desirable to have a preferred height assignment within this cone, so that persistent LCH is a well-defined invariant of Lagrangian diagrams.  Note that we cannot choose the vertex of our cone, since this assigns a height of zero to every crossing.  The lack of obvious choice for a canonical height assignment leaves us with the following question:
\begin{question}
How should we upgrade persistent LCH to a planar isotopy invariant of Lagrangian diagrams?
\end{question}
Before leaving this question, we outline a strategy which the authors find promising.  Each crossing of a Lagrangian diagram $D$ corresponds to a coordinate $h_i$ in $\mathbb{R}^n$, and this coordinate must satsify $h_i>0$.  The Legendrian condition then leads to $n+1$ inequalities in these coordinates, each of which has a constant term of 0.  To obtain a closed set of feasible height assignments in $\mathbb{R}^n$, we may fix parameters $0<\epsilon_A\ll\epsilon_H$ and require that $h_i\geq\epsilon_H$, while the constant term of each area inequality coming from $D$ is replaced by $\epsilon_A$.  The solutions to the resulting collection of inequalities form a closed, convex, unbounded subset of the cone of height assignments described above, with a unique vertex minimizing the sum $h_1+\cdots+h_n$.  We suspect that defining the persistent LCH of $D$ to be computed using the height assignment corresponding to this vertex will allow us to define a Legendrian isotopy invariant with desirable features, and these suspicions are the subject of ongoing investigation.

In addition to our goal of realizing persistent LCH as a Legendrian isotopy invariant, we hope to replicate some of the additional structure enjoyed by LCH in the persistent setting.  For instance, Sabloff constructed in \cite{sabloff2006duality} an expanded Chekanov-Eliashberg DGA from which one may derive a type of Poincar\'{e} duality for Legendrian contact homology.  In persistent LCH, this gives a duality for infinite bars --- if $r_k\geq 0$ is the number of infinite bars in grading $k$ for a persistent LCH barcode, then $r_1=r_{-1}+1$ and $r_k=r_{-k}$ for $k\neq\pm 1$.  It is natural to wonder how our filtration of the Chekanov-Eliashberg DGA might extend to the expanded algebra, and whether this might lead us to a Poincar\'{e} duality for persistent LCH.  A concrete version of this question is as follows.
\begin{question}
Does persistent LCH admit a notion of Sabloff duality for finite bars?
\end{question}

Our optimism for persistent LCH lies in the fact that it retains the computability of linearized LCH while also capturing chain-level data in the form of finite bars.  However, some nonlinear aspects of LCH can be computed, and have proven to be quite useful: in \cite{civan2011product}, cup and Massey products are constructed on linearized LCH and used to recover higher-order linearizations of LCH.  The authors find it quite plausible that these innovations could be duplicated in the persistent setting, and thus pose the following question.

\begin{question}
Does persistent Legendrian contact homology admit a Massey product structure?
\end{question}

Since its introduction, Legendrian contact homology has been utilized and generalized in a great variety of manners, and we are hopeful that some of these successes will be replicated in persistent Legendrian contact homology.

\subsection*{Organization}
This paper is organized as follows. In Section~\ref{sec:background}, we provide relevant background information on persistent homology, barcodes, the interleaving distance, Legendrian knots, Legendrian contact homology, and augmentations. We then describe how these ideas will fit together in this work to accomplish the goal of using persistent homology to understand Legendrian knots. In Section~\ref{sec:toys}, we provide an outline of the procedure for computing the Legendrian contact homology barcode from a Lagrangian diagram. Two worked examples are given, of the standard unknot and the standard trefoil knot.

In Section~\ref{sec:proof}, we set about proving the main theorem of this work, which is Theorem~\ref{thm:main-invariance-result} above. We need to show that any Legendrian Reidemeister move performed on a Lagrangian diagram will perturb in an inappreciable way the area that is contained by the bounded regions of the diagram. This perturbation of areas will, in turn, perturb the heights that can and must be assigned to each Lagrangian diagram double point. These double points will serve as the chain complex generators of Legendrian contact homology, and the height of each generator will be important for the filtration that gives us persistent homology, as explained in Section~\ref{sec:background}. Hence, to complete the proof of the main theorem we must also check that each Legendrian Reidemeister move perturbs generators of the aforementioned filtered chain complex in a way that will be predictable on the level of homology. 

In Section~\ref{sec:flooding}, we present our \textit{flooding algorithm} for assigning heights to a Lagrangian diagram. In Section~\ref{sec:morse-inequalities}, we associate to a Legendrian knot two polynomials, called the Morse-Chekanov polynomial and the Poincar\'e-Chekanov polynomial. We develop the so-called strong Morse inequalities in Theorem~\ref{thm:strong-morse-inequality}, which says that the Morse-Chekanov polynomial majorizes the Poincar\'e-Chekanov polynomial.

\section{Background}\label{sec:background}
In this section, we gather background material for persistent homology in Section~\ref{background:persistent} and for Legendrian contact homology in Section~\ref{background:lch}.

\subsection{Persistent homology}\label{background:persistent}

Persistent homology is a popular tool in topological data analysis.  In this section, we recall the notation and results necessary to build a \emph{barcode} for a filtered DGA, and describe a pseudo-metric on the category of persistence modules.

\subsubsection{Persistence modules}\label{background:pmods}

\begin{definition}
Let $R$ be a fixed ground ring.  A \emph{persistence module over a real parameter} is a functor from the poset category $(\mathbb{R},\leq)$ to the category of $R$-modules.  Similarly, a \emph{persistence module of DGAs} is a functor from $(\mathbb{R},\leq)$ to the category of DGAs over $R$.
\end{definition}

In practice, our ground ring $R$ will always be $\mathbb{Z}_2$, and the unmodified phrase \emph{persistence module} will refer to a persistence module over a real parameter.  We will be interested in persistence modules of DGAs when working at the chain level, and persistence modules over $\mathbb{Z}_2$ once we have passed to homology.  The data we will track at the chain level consists of an $\mathbb{R}$-graded direct sum
\[
\mathcal{A}^\bullet:= \bigoplus_{t\in\mathbb{R}}\mathcal{A}^t
\]
of DGAs $\mathcal{A}^t$ over $\mathbb{Z}_2$, along with a DGA morphism $\psi_s^t\colon\mathcal{A}^s\to\mathcal{A}^t$, for every pair of real numbers $s\leq t$, satisfying
\begin{enumerate}
    \item $\psi_t^t=\id$, for every $t\in\mathbb{R}$;
    \item $\psi_s^t\circ \psi_r^s = \psi_r^t$, for every triple $r\leq s\leq t$.
\end{enumerate}
We refer to $\psi_s^t$ as a \emph{transfer map}.  At the homology level we will consider an $\mathbb{R}$-graded direct sum of $\mathbb{Z}_2$-vector spaces $V^t$ along with linear transfer maps $v_s^t\colon V^s\to V^t$ satisfying conditions analogous to those above.  We will use the notation $\mathcal{A}^\bullet$ for a persistence module of DGAs and $V^\bullet$ for a persistence module over a real parameter.

\begin{example}\label{ex:interval}
For our purposes, the fundamental example of a persistence module is the \emph{interval module}.  Given any interval $I\subset\mathbb{R}$, we may define a family of $\mathbb{Z}_2$-vector spaces
\[
V^t := \left\lbrace\begin{matrix}
    \mathbb{Z}_2, & t\in I\\
    0, & \text{otherwise}
\end{matrix}\right.,
\]
along with linear transformations $v_s^t\colon V^s\to V^t$ defined for $s\leq t$ by
\[
v_s^t := \left\lbrace\begin{matrix}
    \mathrm{id}, & s,t\in I\\
    0, & \text{otherwise}
\end{matrix}\right..
\]
This collection of vector spaces and linear maps determines a persistence module $V^\bullet$; if $I=[a,b)$, we will denote this persistence module by $\mathbb{Z}_2[a,b)$.  

We will see that persistent homology is expressed as a direct sum of interval modules, and this decomposition is called a barcode.
\end{example}

\subsubsection{Persistent homology}
The homology of a DGA $(\mathcal{A},\partial)$ over $\mathbb{Z}_2$ is a direct sum of vector spaces:
\[
H_*(\mathcal{A},\partial) := \bigoplus_{k\in\mathbb{Z}}\dfrac{\ker\partial_k}{\im\partial_{k+1}}.
\]
Analogously, the persistent homology of a persistence module of DGAs $(\mathcal{A}^\bullet,\partial^\bullet)$ over $\mathbb{Z}_2$ is a $\mathbb{Z}$-graded direct sum of persistence modules.  For each integer $k\in\mathbb{Z}$, we have the persistence module 
\[
H_k(\mathcal{A}^\bullet,\partial^\bullet) := \bigoplus_{t\in\mathbb{R}} H_k(\mathcal{A}^t,\partial^t),
\]
with transfer maps $H_k(\mathcal{A}^s,\partial^s)\to H_k(\mathcal{A}^t,\partial^t)$ induced by the chain-level transfer maps $\psi_s^t\colon\mathcal{A}^s\to\mathcal{A}^t$.  From these we obtain the persistent homology of $(\mathcal{A}^\bullet,\partial^\bullet)$:
\[
H_*(\mathcal{A}^\bullet,\partial^\bullet) := \bigoplus_{k\in\mathbb{Z}} H_k(\mathcal{A}^\bullet,\partial^\bullet).
\]
Thus, for instance, the vector space of $H_*(\mathcal{A}^\bullet,\partial^\bullet)$ at filtration level $t\in\mathbb{R}$ is given by
\[
H^t_*(\mathcal{A}^\bullet,\partial^\bullet) = \bigoplus_{k\in\mathbb{Z}} H_k(\mathcal{A}^t,\partial^t),
\]
and the transfer map between the vector spaces at filtration levels $s,t$, with $s\leq t$, is given as a direct sum of transfer maps.

\subsubsection{Nearby persistence modules}\label{sec:nearby}
We explain here how two objects in the category of persistence modules might be considered nearby one another. For concreteness, let us consider two interval modules $V^\bullet = \mathbb{Z}_2[a_1,b_1)$ and $U^\bullet = \mathbb{Z}_2[a_2,b_2)$. Recall that an interval module is a special kind of persistence module whose target (here in the category of vector spaces over $\mathbb{Z}_2$) is either the zero element or is of rank one. Specifically, $V^\bullet = \mathbb{Z}_2[a_1,b_1)$ means to construct an $\mathbb{R}$-graded family of vector spaces, $\{V^t\}_{t\in \mathbb{R}}$, so that
\[
V^t = \left\lbrace\begin{matrix}
    \mathbb{Z}_2, & t\in [a_1,b_1)\\
    0, & \text{otherwise}
\end{matrix}\right.,
\]

With this example in mind, we define a map $U^\bullet \to V^\bullet$ that shifts the $\mathbb{R}$-grading by a distance $\delta$. 
A persistence module homomorphism of (persistence) degree $\delta$ is a collection of maps $\varphi^t \colon U^t \to V^{t+\delta}$ such that the following diagram is commutative;
\[\begin{tikzcd} 
 U^s \arrow[r, "u_s^t"] \arrow[d, "\varphi^s"] & U^{t} \arrow[d, "\varphi^t"] \\ V^{s+\delta} \arrow[r, "v_{s+\delta}^{t+\delta}"]& V^{t+\delta}.
\end{tikzcd}\]
A special case is the $\delta$-shift map from a persistence module to itself, i.e. for each real number $t$, $\mathbb{I}_V^\delta \colon V^t \to V^{t+\delta}$ is a persistence module homomorphism of degree $\delta$. We say that two persistence modules, $U^\bullet$ and $V^\bullet$ are $2\delta$-interleaved if there exist homomorphisms $\varphi^t \colon U^t\to V^{t+\delta}$ and $\psi^t \colon V^t \to U^{t+\delta}$, both of degree $\delta$, such that $\psi^{t+\delta} \circ \varphi^t = \mathbb{I}_U^{2\delta}$ and $\varphi^{t+2\delta} \circ \psi^t = \mathbb{I}_V^{2\delta}$.

This allows us to equip the functor category of persistence modules with an extended pseudometric.

\begin{definition} The \emph{interleaving distance} between two persistence modules is given by
    \[d(V^\bullet, U^\bullet) = \inf\{\epsilon \,\vert\, V^\bullet \text{ and } U^\bullet \text{ are } 2\epsilon\text{-interleaved}\},\]
    and we set $d(V^\bullet, U^\bullet) =\infty$ if $V^\bullet$ and $U^\bullet$ are not $\epsilon$-interleaved for any $\epsilon\geq0$.
\end{definition}

See \cite{bubenik2014categorification} for an explanation of why this function constitutes a pseudometric on persistence modules.

\subsubsection{Barcodes} The most important example of a persistence module is an interval module, because the Legendrian contact homology of a knot can be uniquely expressed as a direct sum of interval modules. This claim follows from Theorem~\ref{thm:decomposition}, above, and the following.

\begin{theorem}\label{thm:uniqueness}
\emph{(Uniqueness theorem of Krull, Remak, Schmidt, and Azumaya cf.  \cite{chazal2016strcture})} Suppose that a persistence module $V^\bullet$ can be decomposed into interval modules in two ways as
\[ V^\bullet = \bigoplus_{\ell \in L} \mathbb{Z}_2J_\ell = \bigoplus_{m \in M}\mathbb{Z}_2K_m\]
where $L$ and $M$ are index sets that will be finite in the examples at hand, and where each $J_\ell$ and each $K_m$ is an interval of real numbers. Then there is a bijection $\sigma \colon L \to M$ satisfying $J_\ell = K_{\sigma(\ell)}$ for each $\ell \in L$.
\end{theorem}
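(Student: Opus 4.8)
The plan is to attach to the persistence module $V^\bullet$ itself --- with no reference to any decomposition --- an integer $\mu_{[a,b)}$ for every half-open interval $[a,b)\subset\mathbb{R}$ (these are the intervals that actually occur for our filtered DGAs), to show that $\mu_{[a,b)}$ equals the number of summands isomorphic to $\mathbb{Z}_2[a,b)$ in \emph{any} interval decomposition afforded by Theorem~\ref{thm:decomposition}, and then to conclude that the two given decompositions of $V^\bullet$ carry the same multiset of intervals, so that any multiplicity-preserving matching serves as the desired bijection $\sigma$. For interval types other than $[a,b)$ one runs the same bookkeeping with the inequalities below adjusted; the statement in full generality is the cited structure theorem \cite{chazal2016strcture}.

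First I would record the only invariant needed, the rank function $r(s,t):=\rank_{\mathbb{Z}_2}\!\left(v_s^t\colon V^s\to V^t\right)$, defined for $s\le t$; being the rank of a fixed linear map, it depends only on $V^\bullet$. Computing it through a decomposition $V^\bullet=\bigoplus_{\ell\in L}\mathbb{Z}_2[a_\ell,b_\ell)$: the transfer map $v_s^t$ is block diagonal with respect to the summands, and on the $\ell$-th block it is the identity $\mathbb{Z}_2\to\mathbb{Z}_2$ exactly when $a_\ell\le s\le t<b_\ell$ and is zero otherwise, so that
\[
r(s,t)=\#\{\ell\in L : a_\ell\le s\ \text{and}\ t<b_\ell\}.
\]

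Now fix $[a,b)$. Since $L$ is finite, the set $\{a,b\}\cup\{a_\ell\}_\ell\cup\{b_\ell\}_\ell$ of marked points is finite; choose $\varepsilon>0$ smaller than half the least distance between distinct marked points. Then no marked point lies in $(a,a+\varepsilon]$, $[a-\varepsilon,a)$, $(b,b+\varepsilon]$, or $[b-\varepsilon,b)$, and evaluating the displayed formula at the four perturbed pairs gives
\[
r(a+\varepsilon,b-\varepsilon)=\#\{a_\ell\le a,\ b_\ell\ge b\},\qquad r(a-\varepsilon,b-\varepsilon)=\#\{a_\ell< a,\ b_\ell\ge b\},
\]
\[
r(a+\varepsilon,b+\varepsilon)=\#\{a_\ell\le a,\ b_\ell> b\},\qquad r(a-\varepsilon,b+\varepsilon)=\#\{a_\ell< a,\ b_\ell> b\},
\]
so that the alternating sum
\[
\mu_{[a,b)}:=r(a+\varepsilon,b-\varepsilon)-r(a-\varepsilon,b-\varepsilon)-r(a+\varepsilon,b+\varepsilon)+r(a-\varepsilon,b+\varepsilon)
\]
telescopes to $\#\{\ell : a_\ell=a,\ b_\ell=b\}$, the multiplicity of $\mathbb{Z}_2[a,b)$ in the decomposition. (When $b=+\infty$ one instead takes $r(a+\varepsilon,t)-r(a-\varepsilon,t)$ for $t$ larger than every finite endpoint, and symmetrically when $a=-\infty$; the doubly infinite interval needs no differencing.) Since the right-hand side of the definition of $\mu_{[a,b)}$ is built purely from ranks of transfer maps of $V^\bullet$, it gives the same answer whether we decompose as $\bigoplus_{\ell\in L}\mathbb{Z}_2J_\ell$ or as $\bigoplus_{m\in M}\mathbb{Z}_2K_m$; two finite multisets of intervals with equal multiplicities coincide, and any bijection realizing that coincidence is the required $\sigma$.

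The only genuine work here is the finite-difference bookkeeping of the third step --- keeping strict and non-strict inequalities straight so that the four terms collapse to exactly $\#\{\ell : a_\ell=a,\ b_\ell=b\}$, together with the separate (easier) treatment of the one-sided intervals --- and this is entirely routine. A more structural alternative, which also accounts for the attribution in the statement, is to observe that $\mathrm{End}(\mathbb{Z}_2[a,b))\cong\mathbb{Z}_2$: any endomorphism is multiplication by a scalar on each $V^t$, and commutation with the identity transfer maps on $[a,b)$ forces that scalar to be constant, so each interval module is indecomposable with local endomorphism ring. Hence the classical Krull--Remak--Schmidt theorem (Azumaya's theorem in the infinite case) applies verbatim in the abelian category of persistence modules over $\mathbb{Z}_2$, and the rank computation above is simply its concrete manifestation in the finite setting, which is exactly the content of the persistence-diagram construction.
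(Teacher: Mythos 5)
Your proposal is correct, but it is worth noting that the paper does not actually prove this statement: it is quoted as a black box from the structure-theory literature (the Krull--Remak--Schmidt--Azumaya uniqueness theorem, cf.\ \cite{chazal2016strcture}), and no argument is given in the text. What you supply is a self-contained proof for the case the paper actually uses --- finitely many bars, each of type $[a,b)$ or $[a,\infty)$ --- via the standard rank-function/inclusion--exclusion computation, and your bookkeeping checks out: with $\varepsilon$ below half the minimal gap between marked points, the four perturbed ranks reduce exactly to the counts you display, and the alternating sum telescopes to $\#\{\ell: a_\ell=a,\ b_\ell=b\}$. Two small points to tighten. First, when you compare the two decompositions you must choose $\varepsilon$ small relative to the marked points of \emph{both} index sets $L$ and $M$ simultaneously (or observe that $\mu_{[a,b)}$ stabilizes for all sufficiently small $\varepsilon$); as written, your $\varepsilon$ is tied to one decomposition. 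This is trivial to fix since both index sets are finite. Second, the theorem as stated allows arbitrary intervals, and your argument covers only the half-open and one-sided types; you are explicit about deferring the general case to the citation, which is reasonable given that these are the only types arising from the paper's filtration, but a fully general proof would either run the analogous bookkeeping for the other interval types or invoke your structural alternative. That alternative --- $\mathrm{End}(\mathbb{Z}_2 I)\cong\mathbb{Z}_2$ is local, so Krull--Remak--Schmidt/Azumaya applies in the category of persistence modules --- is in fact the route the cited reference takes, and including it is what makes your write-up honest about the attribution in the theorem's name. In short: your concrete argument buys an elementary, checkable proof in the setting the paper needs, at the cost of full generality; the paper's citation buys generality at the cost of opacity.
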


We call the decomposition of the LCH persistence module into interval modules a \emph{barcode}.  The uniqueness theorem will permit us to speak of ``the'' barcode of a Legendrian knot. In practice, barcodes for Legendrian knots which admit augmentations may be drawn as a collection of some finite-length intervals and some infinite-length intervals. This is an information-dense way of drawing the persistent Legendrian contact homology. Examples of barcodes as drawings are given in Figure~\ref{fig:toyunknot(b)} and Figure~\ref{fig:toytrefoil(b)}, and the whole of Section~\ref{sec:toys} is devoted to explaining how to compute barcodes, both in general and in specific examples.

 Theorem~\ref{thm:uniqueness} does say that we can permute the intervals in the interval module decomposition that we are calling the barcode. The uniqueness theorem \emph{does not} say that the barcode is insensitive to the assignment of heights, as described in the introduction. 

 In this work, when we apply the interleaving distance to two barcodes, we mean that we are applying the interleaving distance to the persistence module associated to each.

\subsection{Legendrian contact homology}\label{background:lch}
We now establish notation for the Legendrian contact homology of a Legendrian knot $\Lambda\subset\mathbb{R}^3$, constructed from a Lagrangian diagram planar isotopic to  $\Pi(\Lambda)\subset\mathbb{R}^2$.  This construction takes the form of a differential graded algebra --- known as the Chekanov-Eliashberg DGA --- the homology of which is an invariant of the Legendrian isotopy class of $\Lambda$.

Legendrian contact homology fits into the symplectic field theory (SFT) paradigm due to Eliashberg-Givental-Hofer \cite{eliashberg2000introduction}, with the Chekanov-Eliashberg DGA carefully defined in \cite{chekanov2002differential}.  Generalizations later appeared in \cite{etnyre2002invariants}, \cite{ekholm2005orientations}, \cite{ekholm2013knot}, \cite{ng2013satellites}, \cite{ekholm2017duality}, but we will follow \cite{chekanov2002differential} in presenting the DGA over $\mathbb{Z}_2$.

Viewed as a $2$-plane distribution in the tangent bundle of $\mathbb{R}^3$, we denote the standard contact structure as $\xi_{std} = \text{ker}(\alpha_{std})$. The manifold $\mathbb{R}^3$ together with the standard contact structure is a contact manifold, written  $(\mathbb{R}^3,\xi_{std})$.

\subsubsection{Legendrian knots and their diagrams}\label{sec:legs}

As discussed in Section~\ref{sec:intro}, the two most important projections associated to a Legendrian knot $\Lambda\subset\mathbb{R}^3$ are the Lagrangian projection $\Pi(\Lambda)$ and the front projection $F(\Lambda)$.  We will be especially interested in the Lagrangian projection.

While $\Pi(\Lambda)\subset\mathbb{R}^2$ is an immersed curve, we point out that not every closed, immersed curve in $\mathbb{R}^2$ can be realized as a Lagrangian projection.  For instance, a closed, immersed curve which encloses a nonzero signed area is not the Lagrangian projection of any Legendrian knot.  Nonetheless, it is standard practice to represent a Legendrian $\Lambda$ by a knot diagram which is planar isotopic to $\Pi(\Lambda)$, but which may not lift to a Legendrian in $\mathbb{R}^3$.  The distinction between diagrams which lift to Legendrians and those which do not will be important to us, so we establish the following vocabulary.

\begin{definition}
A \emph{Lagrangian projection diagram} (or \emph{Lagrangian diagram}) is a knot diagram in $\mathbb{R}^2$ which is planar isotopic to a Lagrangian projection $\Pi(\Lambda)$ of some Legendrian knot $\Lambda\subset\mathbb{R}^3$.  We assume that $\Pi(\Lambda)$ retains the crossing information of $\Lambda$.
\end{definition}

Where smooth knot diagrams have a strong Reidemeister theorem --- namely, any Reidemeister move on a smooth knot diagram may be realized by a smooth isotopy of knots --- Lagrangian projection diagrams have only a weak Reidemeister theorem.

\begin{theorem}\label{thm:LegReid}\emph{({c.f. \cite[Section 6]{chekanov2002differential}})}
If $\Lambda,\Lambda'$ are Legendrian isotopic Legendrian knots in $\mathbb{R}^3$, then their Lagrangian projection diagrams $\Pi(\Lambda),\Pi(\Lambda')$ are related by a sequence of Legendrian Reidemeister moves, depicted in Figure~\ref{fig:reidemeister-moves}.
\end{theorem}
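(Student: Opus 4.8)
The plan is to run a one-parameter general-position argument on the family of Lagrangian projections induced by a Legendrian isotopy. By definition, the hypothesis provides a smooth path $\Lambda_t$, $t\in[0,1]$, of Legendrian embeddings with $\Lambda_0=\Lambda$ and $\Lambda_1=\Lambda'$; set $\gamma_t:=\Pi(\Lambda_t)$. First I would observe that each $\gamma_t$ is an immersion of $S^1$: writing $\Lambda_t(s)=(x(s),y(s),z(s))$, the Legendrian condition \eqref{eq:LegCondition} gives $z'(s)=y(s)x'(s)$, so $x'(s)=y'(s)=0$ would force $\Lambda_t'(s)=0$, contradicting that $\Lambda_t$ is an embedding. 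For the same reason, whenever $s\neq s'$ satisfy $\gamma_t(s)=\gamma_t(s')$ the $z$-coordinates of $\Lambda_t(s)$ and $\Lambda_t(s')$ differ, since $\Lambda_t$ is embedded; hence at every double point of every $\gamma_t$ the two heights are distinct, and the over/under information of every crossing is recorded by the heights in a way that varies continuously with $t$. The problem is thereby reduced to understanding the loop $\{\gamma_t\}$ in the space of immersed plane curves, equipped with this continuously varying height data.

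Next I would perturb the path $\Lambda_t$, \emph{within the space of Legendrian isotopies} and rel neighborhoods of $t=0$ and $t=1$ (on which the projections are already generic, since $\Pi(\Lambda)$ and $\Pi(\Lambda')$ are), so that the family $\{\gamma_t\}$ is in general position among one-parameter families of immersed plane curves. By the classical stratification of the discriminant for plane-curve immersions, this produces finitely many times $0<t_1<\cdots<t_N<1$ at which $\gamma_{t_i}$ fails to be a generic immersed curve, and at each such $t_i$ the only degeneracy is either a single transverse triple point or a single self-tangency, with all remaining double points transverse and the family crossing the discriminant transversally at $t_i$. No cusp (Reidemeister~I type) phenomenon occurs, because $\gamma_t$ is an immersion for \emph{every} $t$, while a monogon can appear or disappear in a family of plane curves only by the curve passing through a cusp. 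I expect the main obstacle to be precisely the clause ``within the space of Legendrian isotopies'': one needs a parametric transversality statement internal to the restricted class of Legendrian families. One route is to perturb the generating data --- the front projection, or the Lagrangian projection itself constrained to enclose vanishing signed area --- and to verify that the relevant bad jet loci retain the expected codimension after the Legendrian constraint is imposed; an alternative is to first make the family of fronts generic and then translate each front Reidemeister move into a sequence of Lagrangian ones. Everything else is bookkeeping.

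Finally I would assemble the conclusion. On each of $[0,t_1)$, the intervals $(t_i,t_{i+1})$, and $(t_N,1]$, the curves $\gamma_t$ are generic immersed plane curves of constant combinatorial type varying smoothly in $t$, so a standard isotopy extension argument carries their images to one another by ambient isotopies of $\mathbb{R}^2$; since the heights never collide, the crossing data is transported unchanged, so $\Pi(\Lambda_t)$ is a constant Lagrangian diagram on each such interval. It then remains to identify the passage through each $t_i$. Crossing a transverse triple point exchanges three pairwise-transverse crossings for three others in the local model of the Reidemeister~III move; the three participating strands have distinct, continuously varying heights throughout a neighborhood of the event, so their vertical ordering is the same before and after, and the move is the Legendrian Reidemeister~III move of Figure~\ref{fig:reidemeister-moves}. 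Crossing a self-tangency creates or destroys a bigon; its two crossings share a common strand on top --- the one of greater height --- so the move is the Legendrian Reidemeister~II move. (In particular no Reidemeister~I move is needed, consistent with the fact that such a move would change the writhe of the diagram, hence the Thurston--Bennequin number, which is a Legendrian isotopy invariant.) Concatenating the $N$ Reidemeister moves, interspersed with the planar isotopies above, exhibits $\Pi(\Lambda)$ and $\Pi(\Lambda')$ as related by a finite sequence of Legendrian Reidemeister moves, as claimed.
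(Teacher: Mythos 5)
The paper offers no proof of this statement; it is quoted from Chekanov's paper, and your outline is essentially the standard argument from that source and the surveys. Your key observations are the right ones: the Legendrian condition forces $\Pi(\Lambda_t)$ to be an immersion for all $t$ (so no cusps, hence no Reidemeister~I events, consistent with invariance of $tb$); embeddedness forces the two preimages of every double point to have distinct $z$-values, so the over/under data varies continuously and never flips; and a generic one-parameter family of immersed plane curves crosses the discriminant only at self-tangencies (move~II) and triple points (moves~IIIa/IIIb), with the higher strand over at both corners of a newly created bigon.

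The one genuine gap is the step you flag yourself and then leave open: the parametric transversality must be established \emph{within} the class of Legendrian families, and this is where the actual content of the theorem lives. It can be closed as follows. A parametrized Legendrian is recovered from its Lagrangian projection $\gamma$ and a single $z$-offset via $z(t)=z(0)+\int_0^t y\,dx$, and an immersed loop $\gamma$ lifts to a closed \emph{embedded} Legendrian iff $\oint_\gamma y\,dx=0$ and $\int_{t_1}^{t_2}y\,dx\neq 0$ for the two preimages $t_1,t_2$ of each double point. Thus the space of Legendrian knots is an open subset of a codimension-one affine subspace (the zero-total-signed-area locus) of the space of immersed loops, crossed with $\mathbb{R}$. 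The discriminant strata (self-tangency, triple point, and the deeper strata one must avoid) are cut out by jet conditions supported near finitely many points of the curve, whereas the area constraint is a single global linear functional; any local perturbation achieving transversality can be corrected by a further compactly supported perturbation elsewhere to restore $\oint_\gamma y\,dx=0$, so the strata meet the constrained subspace in the expected codimension and Thom transversality for one-parameter families applies. (Alternatively, extend the Legendrian isotopy to an ambient contact isotopy and perturb its generating contact Hamiltonian.) You should also record explicitly that at a self-tangency instant the two branches still have distinct $z$-values — automatic from embeddedness of $\Lambda_{t_i}$ — so the lift never degenerates through the event and the bigon's crossing pattern is the one in Figure~\ref{fig:reidemeister-moves}. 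With that step written out, your argument is complete.
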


\begin{figure}
    \centering
    \begin{subfigure}[b]{0.3\textwidth}\vspace{1em}
        \centering
        \includegraphics[scale=0.4]{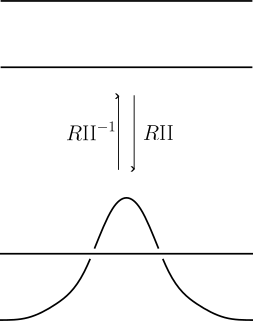}
        \caption{}
        \label{fig:RII}
    \end{subfigure}
    \begin{subfigure}[b]{0.3\textwidth}\vspace{1em}
        \centering
        \includegraphics[scale=0.4]{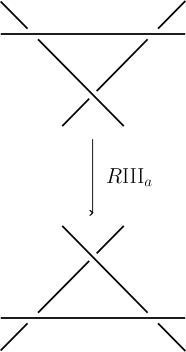}
        \caption{}
        \label{fig:RIIIa}
    \end{subfigure}
    \begin{subfigure}[b]{0.3\textwidth}\vspace{1em}
        \centering
        \includegraphics[scale=0.4]{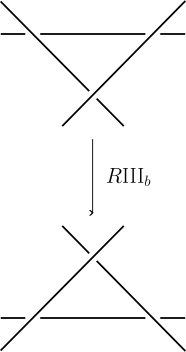}
        \caption{}
        \label{fig:RIIIb}
    \end{subfigure}
    \caption{The three Legendrian Reidemeister moves in the Lagrangian projection. Note that we call the move in which a bigon is removed the ``inverse'' move II. This terminology will be important in Section~\ref{sec:proof}. }
    \label{fig:reidemeister-moves}
\end{figure}

We will see that Legendrian contact homology may be defined from a Lagrangian projection diagram, while persistent Legendrian contact homology \emph{a priori} requires an honest Lagrangian projection.

\subsubsection{The Chekanov-Eliashberg DGA}\label{sec:CEDGA}
The definition of the Chekanov-Eliashberg DGA accepts as input an oriented Legendrian knot $\Lambda\subset(\mathbb{R}^3,\xi_{\std})$.  Important to our work in this paper is that a small Legendrian isotopy may be applied to $\Lambda$ to ensure that its Lagrangian projection $\Pi(\Lambda)\subset\mathbb{R}^2$ has singularities of only one type: transverse double points, where the strands of $\Pi(\Lambda)$ meet orthogonally.  In fact, the starting point of our definition may be any Lagrangian projection diagram for $\Lambda$ with this property.

The algebra $\mathcal{A}_\Lambda$ underlying the Chekanov-Eliashberg DGA associated to $\Lambda$ is generated over $\mathbb{Z}_2$ by $q_1,\ldots,q_n$, where each generator $q_i$, $1\leq i\leq n$, corresponds to a double point of $\Pi(\Lambda)$.  The double points of $\Pi(\Lambda)$ correspond to \emph{Reeb chords} in $\mathbb{R}^3$ with endpoints on $\Lambda$ --- these are arcs parallel to the $z$-axis, and serve as critical points of the Reeb action functional described in Section~\ref{sec:intro}.

The algebra $\mathcal{A}_\Lambda$ is associative, but not commutative.  We refer the reader to \cite{etnyre2020legendrian} for details about the grading and differential on $\mathcal{A}_\Lambda$, but fix notation for these as follows: the grading of a word $w\in\mathcal{A}_\Lambda$ is denoted $|w|\in\mathbb{Z}$, while its differential is denoted $\partial_\Lambda(w)\in\mathcal{A}_\Lambda$.  Up to the small Legendrian isotopy which may be required at the beginning of the construction, $(\mathcal{A}_\Lambda,\partial_\Lambda)$ is a well-defined differential graded algebra.

As a DGA, $(\mathcal{A}_\Lambda,\partial_\Lambda)$ is not an invariant of the Legendrian isotopy type of $\Lambda$.  For instance, a Legendrian isotopy which corresponds to a double point move in $\Pi(\Lambda)$ will either introduce or delete two generators of the DGA.  If, however, $\Lambda,\Lambda'$ are Legendrian isotopic, then the DGAs $(\mathcal{A}_{\Lambda},\partial_{\Lambda})$ and $(\mathcal{A}_{\Lambda'},\partial_{\Lambda'})$ are \emph{stable tame isomorphic}.  Because our proof of Theorem~\ref{thm:main-invariance-result} requires some explicit manipulations of these DGAs, we recall now what it means to be stable tame isomorphic.  The basic story of Legendrian contact homology may be digested without these technical definitions, however.

First, stabilization accounts for the possibility of introducing superfluous generators.

\begin{definition}
For any positive integer $k$, the \emph{grading $k$ stabilization} of a Chekanov-Eliashberg DGA $(\mathcal{A}_\Lambda=\mathbb{Z}_2\langle q_1,\ldots,q_n\rangle,\partial_\Lambda)$ is the DGA whose
\begin{enumerate}
    \item algebra $S_k\mathcal{A}_\Lambda$ is generated over $\mathbb{Z}_2$ by $e_k,e_{k-1},q_1,\ldots,q_n$;
    \item grading is given by $|e_k|=k$, $|e_{k-1}|=k-1$, and agrees with the grading of $\mathcal{A}_\Lambda$ for other generators;
    \item differential $\partial_\Lambda$ satisfies $\partial_\Lambda(e_k)=e_{k-1}$ and agrees with $\partial_\Lambda$ on other generators. 
\end{enumerate}
\end{definition}

After sufficiently many stabilizations, the Chekanov-Eliashberg DGAs of a pair of Legendrian isotopic knots become tame isomorphic.

\begin{definition}
An \emph{elementary automorphism} of a Chekanov-Eliashberg DGA $(\mathcal{A}_\Lambda,\partial_\Lambda)$ with underlying algebra $\mathbb{Z}_2\langle q_1,\ldots,q_n\rangle$ is a chain map $\phi\colon(\mathcal{A}_\Lambda,\partial_\Lambda)\to(\mathcal{A}_\Lambda,\partial_\Lambda)$ of the form
\[
\phi(q_j) = \pm q_j + u,
\qquad
\phi(q_i) = q_i, i\neq j,
\]
for some $1\leq j\leq n$ and some $u\in\mathbb{Z}_2\langle q_1,\ldots,\widehat{q_j},\ldots,q_n\rangle$.  A \emph{tame isomorphism} between two Chekanov-Eliashberg DGAs $(\mathcal{A}_\Lambda=\mathbb{Z}_2\langle q_1,\ldots,q_n\rangle,\partial_\Lambda)$ and $(\mathcal{A}'_\Lambda=\mathbb{Z}_2\langle q'_1,\ldots,q'_n\rangle,\partial'_\Lambda)$ is the result of composing some number of elementary automorphisms of $(\mathcal{A}_\Lambda,\partial_\Lambda)$ with the isomorphism $q_i\mapsto q'_i$.
\end{definition}

We say that a pair of Chekanov-Eliashberg DGAs is stable tame isomorphic if they become tame ismorphic after sufficiently many stabilizations have been applied to each of them.  The most important remark at present is that stable tame isomorphisms are chain homotopy equivalences, and thus the \emph{Legendrian contact homology}
\[
\LCH_*(\Lambda):= H_*(\mathcal{A}_\Lambda,\partial_\Lambda)
\]
is a well-defined invariant of the Legendrian isotopy type of $\Lambda$.

\subsubsection{Linearization}\label{linearize}
In general, $\LCH_*(\Lambda)$ can be difficult to compute and need not have finite rank.  This can be resolved by considering the \emph{linearized Legendrian contact homology}, which we now define.

First, an \emph{augmentation} of $(\mathcal{A}_\Lambda,\partial_\Lambda)$ is an algebra homomorphism $\epsilon\colon\mathcal{A}_\Lambda\to\mathbb{Z}_2$ with the following properties:
\begin{itemize}
    \item $\epsilon(a)=0$, for any $a\in\mathcal{A}_\Lambda$ of nonzero grading;
    \item $\epsilon\circ\partial_\Lambda = 0$.
\end{itemize}
There are Legendrian knots $\Lambda$ for which $(\mathcal{A}_\Lambda,\partial_\Lambda)$ does not admit an augmentation --- for instance, those with $\rot(\Lambda)\neq 0$ --- but the property of admitting an augmentation is invariant under stable tame isomorphism.

Given an augmentation $\epsilon$ of $(\mathcal{A}_\Lambda,\partial_\Lambda)$, we now consider the automorphism $\phi^\epsilon\colon\mathcal{A}_\Lambda\to\mathcal{A}_\Lambda$ defined by
\[
\phi^\epsilon(q_i) := q_i + \epsilon(q_i),
\]
for each generator $q_i$, $1\leq i\leq n$.  Conjugating the differential $\partial_\Lambda$ produces a new differential
\[
\partial^\epsilon_\Lambda := \phi^\epsilon\circ\partial_\Lambda\circ(\phi^\epsilon)^{-1},
\]
making $(\mathcal{A}_\Lambda,\partial_\Lambda^\epsilon)$ a DGA.  The primary advantage held by $\partial^\epsilon_\Lambda$ over $\partial_\Lambda$ is that $\partial^\epsilon_\Lambda(q_i)$, which is a formal sum of words in $q_1,\ldots,q_n$, has constant term given by $(\epsilon\circ\partial_\Lambda)(q_i)=0$, for each $q_i$, $1\leq i\leq n$.  This allows us to define a linear map $\partial_1^\epsilon\colon A_\Lambda\to A_\Lambda$ by
\[
\partial_1^\epsilon(q_i) := \pi_1(\partial^\epsilon_\Lambda(q_i)),
\]
where $A_\Lambda$ is the $\mathbb{Z}_2$-vector space with basis $\{q_1,\ldots,q_n\}$ and $\pi_1\colon\mathcal{A}_\Lambda\to A_\Lambda$ is the linear map which sends to 0 any words of length not equal to 1.

It is straightforward to verify that $\partial_1^\epsilon$ is a differential on $A_\Lambda$, and we define the \emph{linearized Legendrian contact homology of $\Lambda$ with respect to $\epsilon$} to be
\[
\LCH_*^\epsilon(\Lambda) := H_*(A_\Lambda,\partial_1^\epsilon).
\]
It is important to observe that $\LCH_*^\epsilon(\Lambda)$ does, in general, depend on the choice of augmentation $\epsilon$.  However, we may still extract a Legendrian isotopy invariant.

\begin{theorem}[{\cite[Theorem 3.3, Lemma 4.1]{chekanov2002differential}}]\label{thm:linearized-invariant}
The collection of linearized Legendrian contact homologies
\[
\{\LCH_*^\epsilon(\Lambda)\,|\,\epsilon\colon\mathcal{A}_\Lambda\to\mathbb{Z}_2\text{~is an augmentation}\}
\]
is a Legendrian isotopy invariant of $\Lambda$.
\end{theorem}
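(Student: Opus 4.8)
The plan is to exploit the invariance of the stable tame isomorphism class of $(\mathcal{A}_\Lambda,\partial_\Lambda)$ established in Section~\ref{sec:CEDGA}: by Theorem~\ref{thm:LegReid}, Legendrian isotopic knots $\Lambda,\Lambda'$ have stable tame isomorphic Chekanov--Eliashberg DGAs, and since any stable tame isomorphism is a composition of stabilizations followed by a tame isomorphism, it suffices to show that each of these two elementary moves induces a bijection on the respective sets of augmentations under which corresponding linearized homologies are isomorphic as graded $\mathbb{Z}_2$-vector spaces. Tracing these bijections and isomorphisms along a chain realizing the stable tame isomorphism then shows that the collection $\{\LCH^\epsilon_*(\Lambda)\}$ is carried bijectively to $\{\LCH^{\epsilon'}_*(\Lambda')\}$, which is the assertion.

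Stabilization is the easy case. Let $(S_k\mathcal{A}_\Lambda,\partial_\Lambda)$ be a grading-$k$ stabilization, with new generators $e_k,e_{k-1}$ and $\partial_\Lambda e_k=e_{k-1}$. I would first note that every augmentation $\epsilon$ of $S_k\mathcal{A}_\Lambda$ kills $e_k$ and $e_{k-1}$: for $k\geq 2$ both have nonzero grading, and for $k=1$ one still has $\epsilon(e_0)=\epsilon(\partial_\Lambda e_1)=0$; conversely any augmentation of $\mathcal{A}_\Lambda$ extends uniquely. So restriction is a bijection of augmentation sets. Since the conjugating automorphism $\phi^\epsilon$ fixes $e_k,e_{k-1}$ and introduces no $e$'s into $\partial^\epsilon q_i$, the twisted differential satisfies $\partial^\epsilon e_k=e_{k-1}$ and the linearized complex $(A_{S_k\Lambda},\partial_1^\epsilon)$ splits as $(A_\Lambda,\partial_1^\epsilon)$ together with the acyclic summand $\mathbb{Z}_2\langle e_k\rangle\xrightarrow{\;\cong\;}\mathbb{Z}_2\langle e_{k-1}\rangle$; hence the homologies agree.

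The substantive case is a tame isomorphism $\Phi\colon(\mathcal{A}_\Lambda,\partial_\Lambda)\to(\mathcal{A}'_\Lambda,\partial'_\Lambda)$. Given an augmentation $\epsilon'$ of the target, the composite $\epsilon:=\epsilon'\circ\Phi$ is an augmentation of the source (it is graded and $\epsilon\circ\partial_\Lambda=\epsilon'\circ\partial'_\Lambda\circ\Phi=0$), and $\epsilon'\mapsto\epsilon$ is a bijection. The obstacle is that $\Phi$ is an algebra, not a linear, map, so it does not descend to the linearized complexes directly. The fix is to conjugate: set $\widetilde{\Phi}:=\phi^{\epsilon'}\circ\Phi\circ(\phi^\epsilon)^{-1}$, which is a chain isomorphism $(\mathcal{A}_\Lambda,\partial_\Lambda^\epsilon)\to(\mathcal{A}'_\Lambda,\partial'^{\epsilon'}_\Lambda)$ whose underlying algebra automorphism is again tame, and which a short computation (using $\epsilon=\epsilon'\circ\Phi$ and that $\phi^\epsilon$ is an involution over $\mathbb{Z}_2$) shows to have vanishing constant terms, so that $\widetilde\Phi$ sends every generator to a sum of words of length $\geq 1$. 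For such an \emph{augmented} tame isomorphism the linear part $\widetilde\Phi_1:=\pi_1\circ\widetilde\Phi|_{A_\Lambda}\colon A_\Lambda\to A'_\Lambda$ is then (i) a chain map for $\partial_1^\epsilon$ and $\partial_1'^{\epsilon'}$, obtained by applying $\pi_1$ to $\widetilde\Phi\circ\partial^\epsilon_\Lambda=\partial'^{\epsilon'}_\Lambda\circ\widetilde\Phi$ and observing that, because the differentials and $\widetilde\Phi$ all lack constant terms, only length-$1$ contributions reach the length-$1$ output; and (ii) a vector-space isomorphism, since each elementary automorphism has triangular linear part (of the form $q_j\mapsto q_j+(\text{a combination of }q_i,\ i\neq j)$), hence so does any composite of elementary automorphisms, hence so does $\widetilde\Phi_1$. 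Therefore $\widetilde\Phi_1$ is an isomorphism of complexes and $\LCH_*^\epsilon(\Lambda)\cong\LCH_*^{\epsilon'}(\Lambda')$.

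I expect the main obstacle to be exactly the bookkeeping in the last paragraph: verifying that conjugation by $\phi^\epsilon,\phi^{\epsilon'}$ genuinely clears all constant terms, and that once it does the linearization is functorial (linear parts of composites are composites of linear parts, which uses the absence of constant terms in an essential way) so that $\widetilde\Phi_1$ is invertible. These are formal consequences of the triangular shape of elementary automorphisms, but they are where the argument carries its weight; the stabilization case and the assembly of the elementary bijections into a global one are routine.
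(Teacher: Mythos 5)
The paper offers no proof of this statement---it is quoted verbatim from Chekanov (\cite{chekanov2002differential}, Theorem 3.3 and Lemma 4.1)---so there is no in-paper argument to compare against; your proposal is a correct reconstruction of the standard proof: reduce to stabilizations and tame isomorphisms, transport augmentations by precomposition with the isomorphism, and conjugate by $\phi^{\epsilon}$ and $\phi^{\epsilon'}$ so that the resulting map $\widetilde{\Phi}$ is augmented and hence has a well-defined linear part that is a chain map. The one step I would repair is your justification that $\widetilde{\Phi}_1$ is invertible: ``each elementary automorphism has triangular linear part, hence so does any composite'' is not quite right, both because composites of triangular maps need not be triangular and, more importantly, because the factors $\phi^{\epsilon}$, $\phi^{\epsilon'}$, and the elementary automorphisms constituting $\Phi$ may individually carry constant terms, so the linear part of the composite is \emph{not} the composite of the factors' linear parts. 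The clean fix is the one your own parenthetical already points toward: the inverse $\widetilde{\Phi}^{-1}=\phi^{\epsilon}\circ\Phi^{-1}\circ\phi^{\epsilon'}$ is also constant-term-free (by the same computation using $\epsilon=\epsilon'\circ\Phi$), and for constant-term-free algebra maps linear parts do compose, so $(\widetilde{\Phi}^{-1})_1$ is a two-sided inverse of $\widetilde{\Phi}_1$. With that adjustment the argument is complete and consistent with the paper's conventions (in particular, since stabilizations here have grading $k\geq 1$, your claim that every augmentation kills $e_k$ and $e_{k-1}$ is valid).
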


\subsubsection{Filtering the Chekanov-Eliashberg DGA}\label{sec:background:filtering}

Computing a persistent version of (linearized) Legendrian contact homology will require a filtration at the chain level --- that is, on the DGA $(\mathcal{A}_\Lambda,\partial_\Lambda)$ or the differential vector space $(A_\Lambda,\partial_1^\epsilon)$.  A natural candidate for such a filtration is the \emph{action} or \emph{Reeb height} discussed in Section~\ref{sec:intro}.  In particular, the assignment
\begin{equation}\label{eq:height}
h(q) := \int_\gamma\alpha_{\mathrm{std}} = \int_\gamma dz-ydx,
\end{equation}
where $\gamma$ is the Reeb chord with ends on $\Lambda$ which projects to $q$, fits into the Floer-theoretic perspective which first motivated Legendrian contact homology.  For a fixed (generic) Legendrian $\Lambda$, this is a perfectly acceptable filtration, and one may proceed with the computation of persistent homology.

In practice, however, this definition is of limited utility, since most computations of Legendrian contact homology accept as input a Legendrian isotopy class, rather than a fixed Legendrian knot.  For instance, consider that $\LCH_*^\epsilon(\Lambda)$ is typically computed from the Lagrangian projection $\Pi(\Lambda)$.  Given $\Pi(\Lambda)\subset\R^2$, we may recover $\Lambda\subset\mathbb{R}^3$ up to $z$-translation, and thus recover its Reeb heights; but an arbitrary Lagrangian projection diagram of $\Lambda$ will not generally lift to a Legendrian knot in $\mathbb{R}^3$, leaving us with the question of how to define heights from a Lagrangian diagram.

Given a Lagrangian diagram $D\subset\mathbb{R}^2$, we will refer to the bounded components of $\R^2\setminus D$ as \emph{area patches}.  Each area patch $P$ has corners corresponding to Reeb chords, and these corners are assigned \emph{Reeb signs}, defined as follows: as we traverse $\partial P$ with the boundary orientation, a positive Reeb sign is assigned to each corner where we move from an understrand to an overstrand; other corners are assigned negative Reeb signs.

Given an explicit Lagrangian projection $\Pi(\Lambda)$, one may use Stokes' theorem together with the Legendrian condition (\ref{eq:LegCondition}) to find that the (unsigned) area of an area patch is equal to the sum of the Reeb heights of its corners, weighted by the Reeb signs.  This area may change under planar isotopy, but must remain positive.  Thus we may associate a linear inequality to each area patch of a Lagrangian projection diagram.  For instance, the area patches in Figure~\ref{fig:toyunknot(a)} each lead to the inequality $h(q)>0$, while the topmost area patch of Figure~\ref{fig:toytrefoil(a)} has inequality
\[
h(q_1)-h(q_3)-h(q_4)-h(q_5) > 0.
\]
When we speak of the \emph{area inequalities} of a Lagrangian diagram $D$ we are referring to the collection of linear inequalities resulting from its area patches.  We recall that the goals of the present paper include:
\begin{enumerate}
    \item an algorithmic solution to the area inequalities, given a Lagrangian diagram $D$;
    \item a description of how the solution set of the area inequalities changes under Legendrian isotopy.
\end{enumerate}

\section{Toy examples}\label{sec:toys}

This section is concerned with computing persistent Legendrian contact homology in some simple cases. In Section~\ref{barcodegeneral} we describe the general procedure for computing persistent LCH from a Lagrangian projection diagram, and then we present two instructive examples.

\subsection{The general case}\label{barcodegeneral}
We outline the steps needed to construct a barcode from a Lagrangian diagram, provided the Chekanov-Eliashberg DGA admits an augmentation.  Throughout, the corresponding Legendrian isotopy type is denoted by $\Lambda$ and the crossings of the diagram are denoted by $q_1,\ldots,q_n$.

\begin{enumerate}
    \item First, calculate the non-linear Chekanov-Eliashberg DGA $\mathcal{A}_\Lambda$.  Pick a valid augmentation $\epsilon$ and use this to produce a chain complex of $\mathbb{Z}_2$ vector spaces, denoted $(A, \partial_1^\epsilon)$, following the linearization procedure described in Section~\ref{linearize}. 

    \item Assign a height $h(q_i)>0$ to each generator $q_i$ of $A$.  Because this assignment is not included in the data of a Lagrangian diagram, one must make a choice; we present one algorithm for making this choice in Section~\ref{sec:flooding}. \label{step:assign-heights} 

    \item Use the assignment made in Step~\ref{step:assign-heights} to upgrade $A$ to a persistence module of a real parameter.  Namely, for each $\ell \in \mathbb{R}$, let $A^\ell$ denote the subspace of $A$ that is generated by those $q_i$ satisfying $h(q_i)\leq \ell$ and define
    \[
    A^\bullet = \bigoplus_{\ell\in\mathbb{R}}A^\ell.
    \]

    \item Compute the homology of each chain complex $(A^\ell,\partial_1^\epsilon)$ and define the persistence module
    \[
    F^\bullet \LCH^\epsilon_\ast(\Leg) = \bigoplus_{\ell\in\mathbb{R}}H_*(A^\ell, \partial_1^\epsilon).
    \]
    A transfer map on this persistence module will be induced from applying the homology functor to the corresponding transfer map of $A^\bullet$. Note that each vector space $F^t \LCH^\epsilon_\ast(\Leg)$ also has a $\mathbb{Z}$-grading, given by taking homology in different degrees.  We refer to $F^\bullet \LCH^\epsilon_\ast(\Leg)$ as the \emph{persistent Legendrian contact homology} of $\Lambda$ with the auxiliary data of an augmentation $\epsilon$ and height assignment $h$.
\end{enumerate}

It is our decomposition of $F^\bullet \LCH^\epsilon_\ast(\Leg)$ into interval modules which allows us to depict the persistent LCH as a barcode.  Moreover, we will typically decorate our barcodes with (linear combinations of) generators $q_i$, to indicate choices of homology bases.

\subsection{The standard unknot}

\begin{figure}
    \label{fig:toyunknot}
    \centering
    
    \begin{subfigure}[t]{0.4\textwidth}\vspace{1em}
        \centering
        \includegraphics[scale=0.4]{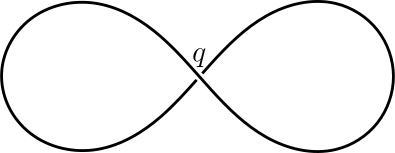}
        \caption{A Lagrangian diagram of a Legendrian unknot. It has one Reeb chord labeled $q$. We assign this chord a height of 1.}
        \label{fig:toyunknot(a)}
    \end{subfigure}\hspace{0.05\textwidth}
    \begin{subfigure}[t]{0.4\textwidth}\vspace{1em}
    \centering
        \begin{tikzpicture}[xscale=1.65,yscale=2.3]
            \draw[-latex] (-.25,0) -- (3,0); \draw[dashed, lightgray] (0.5,0) -- (0.5,.5);
            \draw[dashed, lightgray] (0.5,0) -- (0.5,.6);
            \foreach \x in  {0,0.5}
            \node[blackpoint] at (\x,0){};
            \node[textnode] at (0,-.2) {0};
            \node[textnode] at (0.5,-.2) {1};
            \node[textnode] at (2.9,-.2) {$h$};
            \draw[-stealth] (0.5,.3) node[blackpoint]{} node[left,scale=0.8]{$q$} --  node[above]{$H_1$} (2.9,.3) {};
        \end{tikzpicture}
        \caption{The barcode for the diagram of the unknot (left). This barcode, has one `infinite' bar in $H_1$, which is generated at height 1 by $q$. The starting point of this bar depends on the height assigned to $q$.}
        \label{fig:toyunknot(b)}
    \end{subfigure}
    \caption{The Legendrian unknot and its barcode.}
\end{figure}

For the standard Legendrian unknot whose Lagrangian diagram is pictured in Figure~\ref{fig:toyunknot(a)}, the algebra $\mathcal{A}_\Lambda$ has a single generator $q$, with $|q|=1$, and a simple computation (c.f. \cite[Example 3.5]{etnyre2020legendrian}) shows that $\partial_\Lambda q = 1$.  There is only one augmentation $\epsilon\colon\mathcal{A}_\Lambda\to\mathbb{Z}_2$, and it satisfies $\epsilon(q)=0$.  It follows that the augmented differential $\partial_1^{\epsilon}$ is trivial.  By declaring $h(q)=1$, we may upgrade $A=\mathbb{Z}_2\langle q\rangle$ to a persistence module $A^\bullet$, where
\[
A^\ell = \left\{\begin{matrix}
    0 & \ell < 1\\
    \mathbb{Z}_2\langle q\rangle & 1\leq \ell
\end{matrix}\right.
\]
Because $\partial_1^{\epsilon}$ is trivial, $H_*(A^\ell,\partial_1^\epsilon)\cong A^\ell$, for every $\ell$, and we see that $F^\bullet \LCH^\epsilon_\ast(\Leg)$ is the interval module $\mathbb{Z}_2[1,\infty)$.  The persistent LCH of the unknot is depicted in~Figure~\ref{fig:toyunknot(b)}, with the label $q$ on our infinite bar representing our choice of homology basis $[q]$, as well as the fact that $q$ is ``born" into existence when we begin considering generators of height 1 or greater.

\subsection{The standard trefoil}\label{sec:trefoilex}
Our second (and more interesting) toy example is the standard Legendrian trefoil whose Lagrangian diagram is pictured in Figure~\ref{fig:toytrefoil(a)}.  With the generators as labeled there, one may compute (c.f. \cite[Example 3.6]{etnyre2020legendrian}) the gradings
\[
|q_1| = |q_2| = 1
\quad\text{and}\quad
|q_3| = |q_4| = |q_5| = 0
\]
and the differential $\partial_\Lambda$
\begin{align*}
\partial_\Lambda q_1&= 1 + q_5 + q_5q_4q_3 + q_3 \\
\partial_\Lambda q_2&= 1 + q_3 + q_3q_4q_5 + q_5 \\
\partial_\Lambda q_3 &= \partial_\Lambda q_4 = \partial_\Lambda q_5 = 0.
\end{align*}

\begin{figure}
    \label{fig:toytrefoil}
    \centering
    
    \begin{subfigure}[t]{0.4\textwidth}\vspace{1em}
        \centering
        \includegraphics[scale=0.4]{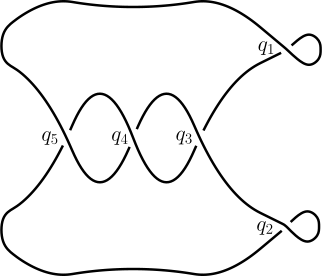}
        \caption{A Lagrangian diagram of a Legendrian trefoil derived from Ng's resolution applied to a front diagram of the trefoil. It has five Reeb chords labeled $q_1$ through $q_5$. $q_1$ and $q_2$ are both assigned height 4. The other three chords are assigned height 1.}
        \label{fig:toytrefoil(a)}
    \end{subfigure}\hspace{0.05\textwidth}
    \begin{subfigure}[t]{0.4\textwidth}\vspace{1em}
        \centering
            \begin{tikzpicture}[xscale=1.65,yscale=2.3]
                    \draw[-latex] (-.25,0) -- (3,0); \draw[dashed, lightgray] (0.5,0) -- (0.5,.5);
                    \draw[dashed, lightgray] (0.5,0) -- (0.5,.8);
                    \draw[dashed, lightgray] (2,0) -- (2,.9);
                    \foreach \x in  {0,0.5,2}
                    \node[blackpoint] at (\x,0){};
                    \node[textnode] at (0,-.2) {0};
                    \node[textnode] at (0.5,-.2) {1};
                    \node[textnode] at (2,-.2) {4};
                    \node[textnode] at (2.9,-.2) {$h$};
                    \draw[-stealth] (0.5,.2) node[blackpoint]{} node[left,scale=0.8]{$q_5$} -- (2.9,.2) {};
                    \draw[-stealth] (0.5,.3) node[blackpoint]{} node[left,scale=0.8]{$q_4$} -- (2.9,.3) {};
                    \draw[-stealth] (2,.6) node[blackpoint]{} node[left,scale=0.8]{$q_1+q_2$} --  node[above]{$H_1$} (2.9,.6) {};
                    \draw (0.5,.4) node[blackpoint]{} node[left,scale=0.8]{$q_3+q_5$} --  (2,.4) node[whitepoint]{} node[right,scale=0.8] {$q_1,q_2$};
                    \draw (0.0,.1) -- ++(-.1,0) -- node[left]{$H_0$}  ++(0,.4) -- ++(.1,0);
            \end{tikzpicture}
        \caption{The barcode for the diagram of the trefoil (left). This barcode, has one infinite bar in $H_1$, which is created at height 4, and two infinite bars in $H_0$, both created at height 1. There is also a finite bar in $H_0$ created at height 1 and destroyed at height 4. The starting point and length of these bars depend on the heights we assign to the Reeb chords.}
        \label{fig:toytrefoil(b)}
    \end{subfigure}    
    \caption{The Legendrian trefoil and its barcode}
\end{figure}

The DGA $(\mathcal{A}_\Lambda,\partial_\Lambda)$ admits five augmentations to $\mathbb{Z}_2$ (c.f. \cite[Example 4.6]{etnyre2020legendrian}), but one may compute that all of these give the linearized differential
\begin{align*}
\partial_{1}^{\epsilon} q_1 &=  q_5  + q_3 \\
\partial_{1}^{\epsilon} q_2 &= q_5 +q_3 \\
\partial_{1}^{\epsilon} q_3 &= \partial_{1}^{\epsilon} q_4 = \partial_{1}^{\epsilon} q_5 = 0.
\end{align*}
Through a process to be described in Section~\ref{sec:flooding}, we produce the heights $h(q_1) = h(q_2) = 4$ and $h(q_3) = h(q_4) = h(q_5) = 1$.  Thus we have
\[
A^\ell = \left\{\begin{matrix}
    0 & \ell < 1\\
    \mathbb{Z}_2\langle q_3,q_4,q_5\rangle & 1\leq \ell < 4\\
    \mathbb{Z}_2\langle q_1,\ldots,q_5\rangle & 4\leq \ell
\end{matrix}\right.
\]
Notice that for $\ell<4$, $\partial_1^\epsilon$ restricts to $A^\ell$ as the zero map, meaning that $H_*(A^\ell,\partial_1^\epsilon)\cong A^\ell$ for $\ell<4$.  For $\ell\geq 4$, the three degree-0 generators of $A^\ell$ are joined by two generators of degree 1, and the sum of these degree-1 generators is trivial in homology.  That is,
\[
H_k(A^\ell,\partial_1^\epsilon) \cong \left\{\begin{matrix}
    \mathbb{Z}_2^2, & k=0\\
    \mathbb{Z}_2, & k=1
\end{matrix}\right.,
\quad\text{for~}\ell\geq 4,
\]
and we may now express the persistent Legendrian contact homology as a direct sum of interval modules:
\[
F^\bullet \LCH^\epsilon_k(\Leg) \cong \left\{\begin{matrix}
    \mathbb{Z}_2[0,\infty)\oplus\mathbb{Z}_2[0,\infty)\oplus\mathbb{Z}_2[0,4), & k=0\\
    \mathbb{Z}_2[4,\infty), & k=1
\end{matrix}\right.
\]
Figure~\ref{fig:toytrefoil(b)} presents the persistent LCH in barcode form.  Once again we have labeled the left endpoints of our bars with an arbitrary choice of homology basis.  The right endpoint of the finite bar is labeled with the generators whose birth causes the change in homology.

\section{Proof of Theorem~\ref{thm:main-invariance-result}}\label{sec:proof}

Because of Theorem~\ref{thm:LegReid}, we know that any Legendrian isotopy of Legendrian knots can be realized in the Lagrangian projection as some sequence of Legendrian Reidemeister moves and planar isotopies. We wish to show that the persistent Legendrian contact homology barcode is an invariant of a Lagrangian projection diagram of a Legendrian knot. It is for this reason that we consider the Lagrangian projection of a knot modulo planar isotopies, and we study how the barcode changes after a single Legendrian Reidemeister move is applied to the diagram. 

In this section we first consider those tame isomorphisms on the Chekanov-Eliashberg DGA which are \emph{semimonotonic} --- to be defined presently --- and argue that semimonotonic isomorphisms will induce a $2\delta$-interleaving on the persistent LCH.  See Proposition~\ref{prop:2delta-interleaving}.  We then argue that the tame isomorphism induced by a single Reidemeister move will be semimonotonic.  This work is done in Sections~\ref{sec:R2} through \ref{sec:R3a}.

Henceforth we will use the word \emph{bigon} to mean a bounded (interior) region of a Lagrangian knot diagram that meets exactly two crossings. An example is drawn in Figure~\ref{fig:thm1.1RII^{-1}move(a)}, below. The naming conventions for the Legendrian Reidemeister moves are shown in Figure~\ref{fig:reidemeister-moves}.

\subsection{The Homological algebra of Reidemeister isotopies}\label{sec:HomAlg}

Throughout this section we consider a filtered Chekanov-Eliashberg DGA $\mathcal{A}_\Lambda$ generated over $\mathbb{Z}_2$ by $q_1,\ldots,q_n$.  We will use the word \emph{height} to refer to the filtration of $\mathcal{A}_\Lambda$, and remind the reader that a Legendrian isotopy of the underlying knot $\Lambda$ will induce a \emph{stable tame isomorphism}.  See Subsection~\ref{sec:CEDGA}.

\begin{definition}
An elementary automorphism $\Bar{\phi}:q_i\mapsto q_i+u$ is called \emph{semimonotonic} if $u$ is a word in generators of height less than $q_i$.  A tame isomorphism is semimonotonic if the elementary automorphisms which witness its tameness may be taken to be semimonotonic.
\end{definition}

Recall from Subsection~\ref{linearize} that the computation of linearized LCH requires conjugating the differential $\partial_\Lambda$ by a certain automorphism of $\mathcal{A}_\Lambda$.  Any tame isomorphism $\Bar{\phi}:\Alg_\Leg\rightarrow \Alg_\Leg$ analogously induces a map
\[
\Bar{\phi}_1^{\epsilon}:(A_\Leg,\partial_1^\epsilon)\rightarrow (A_\Leg,\Bar{\phi}_1^{\epsilon}\circ \partial_1^{\epsilon}\circ (\Bar{\phi}_1^{\epsilon})^{-1}).
\]
on the linearized DGA $(A_\Lambda,\partial_1^\epsilon)$ with respect to an augmentation $\epsilon\colon\mathcal{A}_\Lambda\to\mathbb{Z}_2$.  The following lemma verifies that if $\Bar{\phi}$ is semimonotonic, then we in fact have an induced map on persistence chain complexes.

\begin{lemma}
    \label{lem:semimonotonicity}
    Let $\Bar{\phi}:\Alg_\Leg\rightarrow \Alg_\Leg$ be a tame isomorphism. If $\Bar{\phi}$ is semimonotonic, then $(A_\Leg^\bullet,\Bar{\phi}_1^{\epsilon}\circ \partial_1^\epsilon\circ (\Bar{\phi}_1^{\epsilon})^{-1})$ is a well-defined persistence module of chain complexes and there is an induced isomorphism of persistence modules
        \[\phi_1^{\epsilon,\bullet}:(A_\Leg^\bullet,\partial_1^{\epsilon})\rightarrow (A_\Leg^\bullet,\Bar{\phi}_1^{\epsilon}\circ \partial_1^\epsilon\circ (\Bar{\phi}_1^{\epsilon})^{-1}).\]
\end{lemma}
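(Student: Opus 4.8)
The plan is to verify two things: first, that the conjugated differential $\Bar{\phi}_1^\epsilon\circ\partial_1^\epsilon\circ(\Bar{\phi}_1^\epsilon)^{-1}$ respects the height filtration on $A_\Lambda$, so that $(A_\Lambda^\bullet,\Bar{\phi}_1^\epsilon\circ\partial_1^\epsilon\circ(\Bar{\phi}_1^\epsilon)^{-1})$ is a genuine persistence module of chain complexes; and second, that the linear map $\Bar{\phi}_1^\epsilon$ itself is filtration-preserving, hence induces a morphism — in fact an isomorphism — of persistence modules. The key observation driving both points is that a semimonotonic elementary automorphism $\Bar\phi\colon q_i\mapsto q_i+u$, with $u$ a word in generators of height strictly less than $h(q_i)$, linearizes to a map on $A_\Lambda$ of a very restricted shape.

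First I would unwind what $\Bar\phi$ does after linearization. Since $\Bar\phi$ is built by composing elementary automorphisms, and linearization is compatible with composition, it suffices to treat a single semimonotonic elementary automorphism $\Bar\phi\colon q_i\mapsto q_i+u$. If $u$ is a word of length $\neq 1$, then $\pi_1$ kills its contribution and $\Bar\phi_1^\epsilon$ is the identity on $A_\Lambda$; if $u$ is a single generator $q_j$, then necessarily $h(q_j)<h(q_i)$ by semimonotonicity, and $\Bar\phi_1^\epsilon$ sends $q_i\mapsto q_i+q_j$ and fixes the other basis vectors. (One must also account for the augmentation shift $\phi^\epsilon(q_k)=q_k+\epsilon(q_k)$ used to define $\partial_1^\epsilon$; this is a height-$0$, degree-$0$ correction that commutes through, and I would note that $\epsilon$ vanishes on nonzero-degree generators so it does not disturb the filtration argument.) In either case $\Bar\phi_1^\epsilon$ is a filtered isomorphism: it maps $A_\Lambda^\ell=\langle q_k : h(q_k)\le\ell\rangle$ into itself, because adding a lower-height generator to $q_i$ never increases the height needed to express the image, and its inverse $q_i\mapsto q_i+q_j$ (over $\mathbb{Z}_2$) is of the same form, hence also filtered. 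This gives the claimed isomorphism of graded vector spaces at each filtration level, commuting with the inclusion transfer maps, which is exactly an isomorphism of persistence modules $\phi_1^{\epsilon,\bullet}$.

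Next I would check that the target differential $\partial' := \Bar\phi_1^\epsilon\circ\partial_1^\epsilon\circ(\Bar\phi_1^\epsilon)^{-1}$ is filtered, i.e. $\partial'(A_\Lambda^\ell)\subseteq A_\Lambda^\ell$. The original linearized differential $\partial_1^\epsilon$ is filtered: this is essentially the content of the area inequalities discussed in Section~\ref{sec:background:filtering} — a term of $\partial_\Lambda q$ corresponds to a disk whose positive corner is $q$ and whose negative corners are the generators in the word, and positivity of the disk's area forces those generators to have smaller height than $q$; passing to $\partial_1^\epsilon$ and then to $A_\Lambda^\ell$ preserves this, so $\partial_1^\epsilon$ lowers height (weakly). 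Since $\partial'$ is a composition of three filtered maps (the filtered isomorphism $\Bar\phi_1^\epsilon$, the filtered differential $\partial_1^\epsilon$, and the filtered isomorphism $(\Bar\phi_1^\epsilon)^{-1}$), it is filtered, and $(\partial')^2=0$ is immediate from $(\partial_1^\epsilon)^2=0$. Therefore $(A_\Lambda^\bullet,\partial')$ is a well-defined persistence module of chain complexes, and by construction $\phi_1^{\epsilon,\bullet}$ intertwines $\partial_1^\epsilon$ with $\partial'$ at every level, so it is an isomorphism of persistence modules of chain complexes, as required.

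The main obstacle — and the one place requiring genuine care rather than bookkeeping — is the reduction to a single elementary automorphism together with the interaction with the augmentation conjugation $\phi^\epsilon$. One must be sure that composing semimonotonic elementary automorphisms keeps the ``$u$ has lower height than $q_i$'' property stable under linearization (it does, because height is additive on words and $\pi_1$ only sees length-one terms), and that the bookkeeping of which generator has which height is consistent across the composition; this is where a careful induction on the number of elementary automorphisms is warranted. I expect the rest to be routine diagram-chasing with the transfer maps $\psi_s^t$ being plain inclusions.
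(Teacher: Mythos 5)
Your proof is correct and follows essentially the same route as the paper's: reduce to a single semimonotonic elementary automorphism, show its linearization preserves height because every length-one term it introduces involves only generators of height below $h(q_i)$, and conclude that the conjugated differential is filtered and that $\Bar{\phi}_1^{\epsilon}$ is a filtered isomorphism. One imprecision worth fixing: when $u=q_{i_1}\cdots q_{i_k}$ has length $k>1$, the linearization $\Bar{\phi}_1^{\epsilon}$ is \emph{not} the identity in general; conjugating by $\phi^\epsilon$ produces $\Bar{\phi}_1^\epsilon(q_i)=q_i+\sum_{\ell}\epsilon(q_{i_1})\cdots\widehat{\epsilon(q_{i_\ell})}\cdots\epsilon(q_{i_k})\,q_{i_\ell}$, so length-one terms survive whenever the augmentation is nonzero on the remaining letters --- but since each $h(q_{i_\ell})\leq h(u)<h(q_i)$ by positivity and additivity of heights, your filtration argument goes through unchanged.
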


\begin{proof}
    First, consider the case when $\phi$ is a semimonotonic elementary automorphism with $\phi(q)=q+q_{i_1}\cdots q_{i_k}$. Then
        \[\Bar{\phi}_1^\epsilon(q)=q+\sum_{\ell=1}^k \epsilon(q_{i_1})\cdots \widehat{\epsilon(q_{i_\ell})}\cdots \epsilon(q_{i_k})q_{i_\ell}\]
    and, by definition,
    \[
    h\left(q+\sum_{\ell=1}^k \epsilon(q_{i_1})\cdots \widehat{\epsilon(q_{i_\ell})}\cdots \epsilon(q_{i_k})q_{i_\ell}\right) = \max_\ell(h(q),h(\epsilon(q_{i_1})\cdots \widehat{\epsilon(q_{i_\ell})}\cdots \epsilon(q_{i_k})q_{i_\ell}))=h(q),
    \]
    so that $\Bar{\phi}_1^\epsilon$ is height-preserving. Thus $\Bar{\phi}_1^{\epsilon}\circ \partial_1^\epsilon\circ (\Bar{\phi}_1^{\epsilon})^{-1}$ is height-decreasing so $(A_\Leg^\bullet,\Bar{\phi}_1^{\epsilon}\circ \partial_1^\epsilon\circ (\Bar{\phi}_1^{\epsilon})^{-1})$ is a well-defined chain complex. Clearly, $\Bar{\phi}_1^\epsilon$ induces a persistence isomorphism since $(\Bar{\phi}_1^\epsilon)^{-1}=\Bar{\phi}_1^\epsilon$ over $\Z_2$.

    If $\Bar{\phi}$ is an arbitrary semimonotonic tame isomorphism, then $\Bar{\phi}_1^\epsilon$ is a composition of height-preserving elementary automorphisms by the previous paragraph. For the exact same reasons as in the elementary automorphism case, we see that $(A_\Leg^\bullet,\Bar{\phi}_1^{\epsilon}\circ \partial_1^\epsilon\circ (\Bar{\phi}_1^{\epsilon})^{-1})$ is well-defined and $\Bar{\phi}_1^\epsilon$ induces the desired persistence isomorphism.
\end{proof}

We now see that a composition of semimonotonic elementary automorphisms induces an isomorphism of persistent chain complexes.  But an isotopy of Legendrian knots will induce a tame isomorphism --- obtained by composing some number of elementary automorphisms with the isomorphism $q\mapsto q'$ --- and thus we need a criterion to ensure that the isomorphism $q\mapsto q'$ induces a $2\delta$-interleaving of persistent chain complexes.  The following lemma provides such a criterion.

\begin{lemma}\label{lem:sigma-2delta-interleaving}
Let $(\mathcal{A}_\Lambda,\partial_\Lambda)$ and $(\mathcal{A}'_\Lambda,\partial'_\Lambda)$ be filtered Chekanov-Eliashberg DGAs, with the former generated by $q_1,\ldots,q_n$ and the latter generated by $q_1',\ldots,q_n'$.  Let $\sigma\colon(\mathcal{A}_\Lambda,\partial_\Lambda) \to (\mathcal{A}'_\Lambda,\partial'_\Lambda)$ be the DGA isomorphism defined by $\sigma(q_i)=q_i'$.  If 
\begin{align*}
    \delta\geq\min\{|h'(\sigma(q))-h(q)|\,\text{\emph{ such that }}\,q\in A_\Lambda\},
\end{align*}
where $h$ and $h'$ denote the height filtrations in $\mathcal{A}_\Lambda$ and $\mathcal{A}'_\Lambda$, respectively, then $\sigma$ induces a $2\delta$-interleaving of the persistent linearized contact chain complexes $(A_\Lambda^\bullet,\partial_1^\epsilon)$ and $((A'_\Lambda)^\bullet,(\partial')_1^\epsilon)$.
\end{lemma}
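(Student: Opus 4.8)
The plan is to obtain the interleaving directly from $\sigma$, after transporting all of the linearization data along it. First I would record that $\sigma$ carries the augmentation $\epsilon\colon\mathcal{A}_\Lambda\to\mathbb{Z}_2$ to the augmentation $\epsilon':=\epsilon\circ\sigma^{-1}$ of $\mathcal{A}'_\Lambda$, and that $\sigma$ conjugates the augmented differential $\partial^\epsilon_\Lambda$ to $(\partial')^{\epsilon'}_\Lambda$: indeed $\sigma\circ\phi^\epsilon\circ\sigma^{-1}=\phi^{\epsilon'}$ on generators, and $\sigma$ is a chain map by hypothesis, so $\sigma\circ\partial^\epsilon_\Lambda\circ\sigma^{-1}=\phi^{\epsilon'}\circ\partial'_\Lambda\circ(\phi^{\epsilon'})^{-1}$. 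Taking length-one parts, $\sigma$ descends to a \emph{chain isomorphism} $\sigma_1\colon(A_\Lambda,\partial_1^\epsilon)\to(A'_\Lambda,(\partial')_1^{\epsilon'})$ which on the preferred bases is simply $q_i\mapsto q_i'$. The key structural point is that $\partial_1^\epsilon$ and $(\partial')_1^{\epsilon'}$ are then represented by the \emph{same matrix} in the bases $\{q_i\}$ and $\{q_i'\}$, so that both $\sigma_1$ and $\sigma_1^{-1}$ are chain maps. (Here I read $(\partial')_1^\epsilon$ in the statement as $(\partial')_1^{\epsilon'}$, and I use the hypothesis on $\delta$ in the form $\delta\geq|h'(\sigma(q_i))-h(q_i)|$ for every generator $q_i$.)

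Next I would upgrade $\sigma_1$ and $\sigma_1^{-1}$ to persistence homomorphisms of degree $\delta$. Put $\delta_i:=|h'(q_i')-h(q_i)|$ and assume $\delta\geq\delta_i$ for all $i$. If $v\in A_\Lambda^t$, then $v$ is a $\mathbb{Z}_2$-combination of those generators $q_i$ with $h(q_i)\leq t$, and each such generator satisfies $h'(q_i')\leq h(q_i)+\delta_i\leq t+\delta$; hence $\sigma_1(v)\in(A'_\Lambda)^{t+\delta}$. Thus $\sigma_1$ restricts to a family $\varphi^t\colon A_\Lambda^t\to(A'_\Lambda)^{t+\delta}$ that commutes with the inclusion transfer maps, i.e. a degree-$\delta$ persistence homomorphism, and it is a chain map because $\sigma_1$ is. The symmetric estimate $h(q_i)\leq h'(q_i')+\delta_i\leq t+\delta$ shows that $\sigma_1^{-1}$ likewise restricts to a degree-$\delta$ persistence chain map $\psi^t\colon(A'_\Lambda)^t\to A_\Lambda^{t+\delta}$.

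Finally I would verify the interleaving identities of Section~\ref{sec:nearby}. Because $\sigma_1^{-1}\circ\sigma_1=\id$ and $\sigma_1\circ\sigma_1^{-1}=\id$ on the nose, the composite $\psi^{t+\delta}\circ\varphi^t\colon A_\Lambda^t\to A_\Lambda^{t+2\delta}$ is exactly the inclusion induced by the identity, which is the $2\delta$-shift map $\mathbb{I}_{A_\Lambda}^{2\delta}$, and symmetrically $\varphi^{t+\delta}\circ\psi^t=\mathbb{I}_{A'_\Lambda}^{2\delta}$. This is precisely a $2\delta$-interleaving of the persistent linearized chain complexes $(A_\Lambda^\bullet,\partial_1^\epsilon)$ and $((A'_\Lambda)^\bullet,(\partial')_1^{\epsilon'})$, and applying the homology functor then also yields a $2\delta$-interleaving of the corresponding persistent linearized Legendrian contact homologies. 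I do not anticipate a genuine obstacle: the argument is bookkeeping against the definition of interleaving, and the only step demanding care is the first one, where one must transport $\epsilon$ along $\sigma$ so that $q_i\mapsto q_i'$ is literally a chain map rather than merely an isomorphism of filtered graded vector spaces.
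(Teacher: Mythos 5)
Your proof is correct and follows essentially the same route as the paper's: both define the interleaving maps by applying $\sigma$ (resp.\ $\sigma^{-1}$) on generators and composing with the transfer maps, using the height estimate $h'(\sigma(q))\leq h(q)+\delta$ to see the maps land in the right filtration level, and then verifying the two composite identities. Your opening paragraph transporting the augmentation to $\epsilon'=\epsilon\circ\sigma^{-1}$ so that $q_i\mapsto q_i'$ is literally a chain map is a worthwhile clarification the paper leaves implicit, and your reading of the hypothesis as a bound on $|h'(\sigma(q_i))-h(q_i)|$ for \emph{every} generator (i.e.\ a maximum rather than the stated minimum) is the interpretation the paper's own proof actually uses.
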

\begin{proof}
For any $t\in\mathbb{R}$ we define
\[
\varphi^t\colon (A_\Lambda^t,\partial_1^\epsilon) \to ((A'_\Lambda)^{t+\delta},(\partial')_1^\epsilon)
\]
by $\varphi^t(q):=v_{h'(\sigma(q))}^{t+\delta}(\sigma(q))$, for any $q\in A_\Lambda^t$.  Here $v_r^s\colon((A'_\Lambda)^r,(\partial')_1^\epsilon)\to((A'_\Lambda)^s,(\partial')_1^\epsilon)$ is the linear transfer map, for any $r\leq s$.  Our assumption on the size of $\delta$ ensures that $h'(\sigma(q))\leq h(q)+\delta\leq t+\delta$, and thus $\varphi^t(q)$ is well-defined.  We similarly define
\[
\psi^t\colon ((A'_\Lambda)^{t},(\partial')_1^\epsilon) \to (A_\Lambda^{t+\delta},\partial_1^\epsilon)
\]
by the formula $\psi^t(q'):=u_{h(\sigma^{-1}(q'))}^{t+\delta}(\sigma^{-1}(q'))$, with $u_r^s$ denoting a transfer map on $A_\Lambda^\bullet$.  Now for any $q\in A_\Lambda^t$ we see that
\[
(\psi^{t+\delta}\circ\varphi^t)(q) = \psi^{t+\delta}(v_{h'(\sigma(q))}^{t+\delta}(\sigma(q))) = u_{h(q)}^{t+2\delta}(q) = \mathbb{I}_{A_\Lambda}^{2\delta}(q).
\]
We similarly see that $\varphi^{t+\delta}\circ\psi^t=\mathbb{I}_{A'_\Lambda}^{2\delta}$. We have exhibited a $2\delta$-interleaving between the persistence modules $A_\Lambda^\bullet$ and $(A'_\Lambda)^\bullet$, as desired.
\end{proof}

The previous two lemmas provide criteria to ensure that a tame isomorphism induces a $2\delta$-interleaving of chain complexes.  Because homology is functorial, these criteria are sufficient to ensure a $2\delta$-interleaving of the resulting homologies $F^\bullet\LCH^\epsilon_\ast(\Leg)$ and $F^\bullet\LCH^\epsilon_\ast(\Leg')$.  We summarize this fact as follows.

\begin{proposition}\label{prop:2delta-interleaving}
Let $\Bar{\phi}\colon(\mathcal{A}_\Lambda,\partial_\Lambda)\to(\mathcal{A}_\Lambda,\partial_\Lambda)$ be a composition of semimonotonic elementary automorphisms, and let $\sigma\colon(\mathcal{A}_\Lambda,\partial_\Lambda) \to (\mathcal{A}'_\Lambda,\partial'_\Lambda)$ be the DGA isomorphism identified in Lemma~\ref{lem:sigma-2delta-interleaving}, satisfying the hypotheses given there.  Then $\phi=\sigma\circ\Bar{\phi}$ induces a $2\delta$-interleaving
\[
(\phi_1^{\epsilon,\bullet})_*\colon F^\bullet\LCH_\ast^\epsilon(\Leg)\to F^{\bullet+\delta}\LCH_\ast^{\epsilon'}(\Leg').
\]
\end{proposition}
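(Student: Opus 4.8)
The plan is to combine the two preceding lemmas in sequence and then apply the homology functor. Write $\phi = \sigma \circ \Bar\phi$, where $\Bar\phi$ is a composition of semimonotonic elementary automorphisms of $(\mathcal{A}_\Lambda,\partial_\Lambda)$ and $\sigma\colon(\mathcal{A}_\Lambda,\partial_\Lambda)\to(\mathcal{A}'_\Lambda,\partial'_\Lambda)$ is the DGA isomorphism $q_i\mapsto q_i'$ satisfying the hypothesis of Lemma~\ref{lem:sigma-2delta-interleaving}. On the level of linearized chain complexes, $\phi$ induces $\phi_1^\epsilon = \sigma_1^\epsilon \circ \Bar\phi_1^\epsilon$ in the usual way (conjugating $\partial_1^\epsilon$ as in Section~\ref{linearize}). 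First I would invoke Lemma~\ref{lem:semimonotonicity}: since $\Bar\phi$ is semimonotonic, $\Bar\phi_1^\epsilon$ is height-preserving, so $(A_\Lambda^\bullet, \Bar\phi_1^{\epsilon}\circ\partial_1^\epsilon\circ(\Bar\phi_1^{\epsilon})^{-1})$ is a well-defined persistence module of chain complexes and $\Bar\phi_1^{\epsilon,\bullet}$ is an \emph{isomorphism} of persistence modules — equivalently, a $0$-interleaving.

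Next I would invoke Lemma~\ref{lem:sigma-2delta-interleaving}: the isomorphism $\sigma$ induces a $2\delta$-interleaving of the persistent linearized chain complexes $(A_\Lambda^\bullet,\Bar\phi_1^\epsilon\circ\partial_1^\epsilon\circ(\Bar\phi_1^\epsilon)^{-1})$ and $((A'_\Lambda)^\bullet,(\partial')_1^\epsilon)$, via the shift maps $\varphi^t, \psi^t$ constructed there. The composite $\phi_1^{\epsilon,\bullet} = \sigma_1^{\epsilon,\bullet}\circ\Bar\phi_1^{\epsilon,\bullet}$ is then the composition of a $0$-interleaving with a $2\delta$-interleaving, hence a $2\delta$-interleaving $A_\Lambda^\bullet \to (A'_\Lambda)^{\bullet+\delta}$. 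Concretely: precomposing the interleaving maps of Lemma~\ref{lem:sigma-2delta-interleaving} with the persistence isomorphism $\Bar\phi_1^{\epsilon,\bullet}$ (and its inverse on the other side) preserves the two defining identities $\psi^{t+\delta}\circ\varphi^t = \mathbb{I}^{2\delta}$ and $\varphi^{t+2\delta}\circ\psi^t = \mathbb{I}^{2\delta}$, since $\Bar\phi_1^{\epsilon,\bullet}$ commutes with the $\delta$-shift maps (being height-preserving) and squares to the identity over $\mathbb{Z}_2$.

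Finally, I would pass to homology. The persistent LCH $F^\bullet\LCH^\epsilon_\ast(\Leg)$ is by definition $\bigoplus_\ell H_\ast(A_\Lambda^\ell,\partial_1^\epsilon)$, and the homology functor, applied degree-by-degree and filtration-level-by-filtration-level, takes the interleaving diagrams of chain complexes to interleaving diagrams of $\mathbb{Z}_2$-vector spaces: a chain homotopy equivalence (here an honest isomorphism at each level) becomes an isomorphism on homology, and the two triangle identities $\psi^{t+\delta}\circ\varphi^t = \mathbb{I}_{A_\Lambda}^{2\delta}$, $\varphi^{t+2\delta}\circ\psi^t = \mathbb{I}_{A'_\Lambda}^{2\delta}$ are preserved under $H_\ast$ because $H_\ast$ is functorial and sends $\mathbb{I}^{2\delta}$ to the shift map on homology. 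This yields the claimed $2\delta$-interleaving $(\phi_1^{\epsilon,\bullet})_*\colon F^\bullet\LCH_\ast^\epsilon(\Leg)\to F^{\bullet+\delta}\LCH_\ast^{\epsilon'}(\Leg')$. I expect the only genuinely delicate point — and hence the step to write carefully — to be the bookkeeping that the conjugated differentials on the intermediate persistence module $(A_\Lambda^\bullet,\Bar\phi_1^\epsilon\circ\partial_1^\epsilon\circ(\Bar\phi_1^\epsilon)^{-1})$ are the correct source/target for Lemma~\ref{lem:sigma-2delta-interleaving}, i.e. that $\sigma$ intertwines the twisted differential on $A_\Lambda^\bullet$ with $(\partial')_1^\epsilon$ on $(A'_\Lambda)^\bullet$; everything else is a formal consequence of functoriality and the two lemmas.
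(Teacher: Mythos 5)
Your proposal is correct and follows the same route as the paper, which offers no separate proof of this proposition beyond the remark that the two preceding lemmas give a $2\delta$-interleaving at the chain level and that functoriality of homology then yields the interleaving of $F^\bullet\LCH_\ast^\epsilon$. Your more careful spelling-out of the composition (the semimonotonic part as a $0$-interleaving, $\sigma$ as the $2\delta$-shift, and the bookkeeping of the twisted differential) is exactly the content the paper leaves implicit.
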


Finally, we observe that a filtered Chekanov-Eliashberg DGA may be stabilized in a manner which preserves its homology, up to $2\delta$-interleaving.

\begin{proposition}\label{prop:stabilization-interleaving}
Let $(\mathcal{A}_\Lambda,\partial_\Lambda)$ be a filtered Chekanov-Eliashberg DGA, and let $(S_k\mathcal{A}_\Lambda,\partial_\Lambda)$ be its grading-$k$ stabilization, with additional generators denoted $e_k$ and $e_{k-1}$.  If the filtration of $\mathcal{A}_\Lambda$ is extended to $S_k\mathcal{A}_\Lambda$ in such a way that $0<h(e_k)-h(e_{k-1})<2\delta$, then the persistent linearized contact homologies of $(\mathcal{A}_\Lambda,\partial_\Lambda)$ and  $(S_k\mathcal{A}_\Lambda,\partial_\Lambda)$ are $2\delta$-interleaved.
\end{proposition}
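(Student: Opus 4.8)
The plan is to show that stabilization appends exactly one short interval module to the persistent linearized homology, and then to absorb that bar into a $2\delta$-interleaving. First I would fix an augmentation $\epsilon$ of $\mathcal{A}_\Lambda$ and extend it to $S_k\mathcal{A}_\Lambda$ by $\epsilon(e_k)=\epsilon(e_{k-1})=0$; the condition $\epsilon\circ\partial_\Lambda=0$ forces $\epsilon(e_{k-1})=\epsilon(\partial_\Lambda e_k)=0$, so this is the only option, even when $k=1$ and $e_{k-1}$ has grading $0$. Since $\phi^\epsilon$ fixes $e_k,e_{k-1}$ and $\partial_\Lambda$ on $S_k\mathcal{A}_\Lambda$ agrees with the original differential on the old generators, one checks that $\partial_1^\epsilon e_k=e_{k-1}$, $\partial_1^\epsilon e_{k-1}=0$, and the linearized complex splits as a direct sum of persistence chain complexes
\[
\big(S_kA_\Lambda^\bullet,\partial_1^\epsilon\big)\;\cong\;\big(A_\Lambda^\bullet,\partial_1^\epsilon\big)\;\oplus\;\big(C^\bullet,\partial_1^\epsilon\big),
\]
where $C$ is generated by $e_k,e_{k-1}$ and $C^t=0$ for $t<h(e_{k-1})$, $C^t=\mathbb{Z}_2\langle e_{k-1}\rangle$ for $h(e_{k-1})\le t<h(e_k)$, and $C^t=\mathbb{Z}_2\langle e_k,e_{k-1}\rangle$ for $t\ge h(e_k)$.

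Taking homology and using additivity over direct sums, I would read off
\[
H_\ast\big(S_kA_\Lambda^\bullet,\partial_1^\epsilon\big)\;\cong\;F^\bullet\LCH_\ast^\epsilon(\Lambda)\;\oplus\;\mathbb{Z}_2[\,h(e_{k-1}),h(e_k)\,),
\]
with the interval summand in Maslov grading $k-1$: the cycle $e_{k-1}$ is born at $h(e_{k-1})$ and becomes a boundary precisely when $e_k$ enters at $h(e_k)$. The hypothesis $0<h(e_k)-h(e_{k-1})$ guarantees that $\partial_1^\epsilon$ strictly lowers height (so the filtered complex is legitimate and this is a genuine bar), and $h(e_k)-h(e_{k-1})<2\delta$ says the new bar has length below $2\delta$.

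It then remains to interleave $V^\bullet\oplus\mathbb{Z}_2[a,b)$ with $V^\bullet$ when $b-a<2\delta$. Because interleavings are additive, this reduces to $2\delta$-interleaving $\mathbb{Z}_2[a,b)$ with the zero module, where the only possible maps are zero and the interleaving identities degenerate to $\mathbb{I}^{2\delta}_{\mathbb{Z}_2[a,b)}=0$, which holds since no point of $[a,b)$ has its $2\delta$-translate back inside $[a,b)$. To keep the argument self-contained and parallel to Lemma~\ref{lem:sigma-2delta-interleaving}, I would instead write it out via the filtration-degree-$0$ maps $i\colon A_\Lambda\hookrightarrow S_kA_\Lambda$ (inclusion) and $p\colon S_kA_\Lambda\to A_\Lambda$ (kill $e_k,e_{k-1}$), noting $p\circ i=\id$, and set $\varphi^t=g_t^{t+\delta}\circ i_\ast^t$ and $\psi^t=f_t^{t+\delta}\circ p_\ast^t$ on homology, where $f,g$ are the transfer maps of the two persistence modules. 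Then $\psi^{t+\delta}\circ\varphi^t=\mathbb{I}^{2\delta}$ is immediate from $(p\circ i)_\ast=\id$, while $\varphi^{t+\delta}\circ\psi^t=g_t^{t+2\delta}\circ(i\circ p)_\ast^t$ and $(i\circ p)_\ast^t-\id$ is supported on $[e_{k-1}]\in H_{k-1}(C^t)$ for $t\in[h(e_{k-1}),h(e_k))$; so the claim reduces to $g_t^{t+2\delta}[e_{k-1}]=0$, which holds because $t+2\delta\ge h(e_{k-1})+2\delta>h(e_k)$, hence $e_k$ lies in $(S_kA_\Lambda)^{t+2\delta}$ and $[e_{k-1}]=[\partial_1^\epsilon e_k]$ vanishes there.

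There is no deep obstacle here; the two points needing care are verifying the direct-sum splitting of the linearized stabilized complex (including the extension of $\epsilon$ across the possible grading-$0$ generator), and pinpointing that the hypothesis $h(e_k)-h(e_{k-1})<2\delta$ is used exactly once — in the vanishing $g_t^{t+2\delta}[e_{k-1}]=0$ — which is what lets the one nontrivial direction of the interleaving close up.
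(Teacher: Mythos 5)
Your proposal is correct, and its second half is essentially the paper's own proof: the maps $i$ and $p$ you define are exactly the paper's $\iota^t$ and $r^t$ (inclusion, and the quotient killing $e_k,e_{k-1}$, each followed by a $\delta$-shift), with $p\circ i=\id$ giving one interleaving identity for free and the other reducing to the vanishing of $[e_{k-1}]$ after a $2\delta$-shift. What you add beyond the paper is worthwhile on two counts. First, your opening splitting
\[
\bigl(S_kA_\Lambda^\bullet,\partial_1^\epsilon\bigr)\cong\bigl(A_\Lambda^\bullet,\partial_1^\epsilon\bigr)\oplus\bigl(C^\bullet,\partial_1^\epsilon\bigr),
\qquad
H_*(C^\bullet)\cong\mathbb{Z}_2[\,h(e_{k-1}),h(e_k)\,)\ \text{in grading }k-1,
\]
identifies the effect of stabilization on the barcode outright (one short bar appended) and reduces the proposition to the standard fact that an interval of length $<2\delta$ is $2\delta$-interleaved with $0$; this is a cleaner, more structural route, though it relies on additivity of interleavings over direct sums, which the paper never states. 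Second, you pinpoint exactly where the hypothesis $h(e_k)-h(e_{k-1})<2\delta$ enters: one needs $t+2\delta>h(e_k)$ so that $e_k$ is already present at level $t+2\delta$ and $[e_{k-1}]=[\partial_1^\epsilon e_k]$ dies there. The paper's proof asserts only that ``$e_{k-1}=\partial e_k$ vanishes at the level of homology'' without flagging that this requires the height hypothesis, so your version is the more careful of the two. Your handling of the augmentation extension ($\epsilon(e_k)=\epsilon(e_{k-1})=0$ forced by $\epsilon\circ\partial_\Lambda=0$) is also a detail the paper suppresses, and it is handled correctly.
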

\begin{proof}
For any $t\in\mathbb{R}$ we define homomorphisms
\[
\iota^t\colon (A_\Lambda^t,\partial_1^\epsilon) \to (S_kA_\Lambda^{t+\delta},\partial_1^\epsilon)
\]
and
\[
r^t \colon (S_kA_\Lambda^t,\partial_1^\epsilon) \to (A_\Lambda^{t+\delta},\partial_1^\epsilon).
\]
The former is the natural inclusion operation followed by the transfer map $u_t^{t+\delta}$, while the latter is defined by
\[
r^t(q) := \left\{\begin{matrix}
    u_t^{t+\delta}(q), & q\neq e_k,e_{k-1}\\
    u_t^{t+\delta}(0), & q=e_k,e_{k-1}
\end{matrix}\right..
\]
Notice that $r^{t+\delta}\circ\iota^t=\mathbb{I}_{A_\Lambda}^{2\delta}$, so the induced map on homology is $\mathbb{I}_{H_*(A_\Lambda)}^{2\delta}$.  On the other hand, $\iota^{t+\delta}\circ r^t$ does not agree with $\mathbb{I}_{S_kA_\Lambda}^{2\delta}$, since, for instance, $(\iota^{t+\delta}\circ r^t)(e_{k-1})=0\neq u_t^{t+2\delta}(e_{k-1})$.  However, $e_{k-1}=\partial e_k$ vanishes at the level of homology, and thus $\iota^{t+\delta}_*\circ r^t_*=\mathbb{I}_{H_*(S_kA_\Lambda)}^{2\delta}$.  We conclude that $\iota^t_*$ and $r^t_*$ give a $2\delta$-interleaving of $H_*(A_\Lambda)$ and $H_*(S_kA_\Lambda)$.
\end{proof}

Propositions~\ref{prop:2delta-interleaving} and~\ref{prop:stabilization-interleaving} give our strategy for proving Theorem~\ref{thm:main-invariance-result}.  Namely, any Legendrian isotopy will induce a stable tame isomorphism between Chekanov-Eliashberg DGAs --- that is, a tame isomorphism between stabilizations of the Chekanov-Eliashberg DGAs.  We will prove Theorem~\ref{thm:main-invariance-result} by demonstrating that there exist height assignments before and after the isotopy such that:

\begin{enumerate}
    \item any generator which survives the isotopy sees its height change by at most $\delta$;
    \item any two cancelling generators which are created or destroyed by the isotopy have heights that are at most $2\delta$ apart;
    \item the induced tame isomorphism of DGAs is semimonotonic.
\end{enumerate}
The first two conditions ensure that the hypotheses of Lemma~\ref{lem:sigma-2delta-interleaving} and Proposition~\ref{prop:stabilization-interleaving} are satisfied, while the final condition allows us to apply Lemma~\ref{lem:semimonotonicity}.  In the ensuing four subsections, we apply this strategy to the Legendrian isotopies depicted in Figure~\ref{fig:reidemeister-moves}.

\subsection{The case of a Reidemeister II move}\label{sec:R2}

Consider an arbitrary Lagrangian diagram of a Legendrian knot and consider any possible Reidemeister II move on that diagram. Denote the Lagrangian diagrams before and after the move as $\Pi(\Lambda)$ and $\Pi(\Lambda')$, with $\Pi(\Lambda')$ having two crossings which do not appear in $\Pi(\Lambda)$.  Following Figure~\ref{fig:thm1.1RIImove(c)} and Proposition~\ref{prop:2delta-interleaving}, we refer to these crossings as $a$ and $b$.  From Remark 2.8 and Theorem 4.1 of \cite{kalman2005contact}, we know that the heights of $a$ and $b$ can be made arbitrarily close to one another relative to the heights of the surrounding crossings.  Here the `surrounding crossings' are the vertices of the area patches which meet $a$ or $b$ and \cite[Theorem 4.1]{kalman2005contact} allows us to ensure that $0<h(a)-h(b)<\delta$, for some $\delta>0$ much smaller than the height of any surrounding crossing.

We first examine how the area inequalities associated to $\Pi(\Lambda')$ compare to those of $\Pi(\Lambda)$.  Inequalities associated to area patches not depicted in Figure~\ref{fig:thm1.1RIImove} will persist as we move from $\Pi(\Lambda)$ to $\Pi(\Lambda')$.  The inequalities associated to the top- and bottommost area patches of Figure~\ref{fig:thm1.1RIIIbmove(c)} will differ from those of the top- and bottommost area patches of Figure~\ref{fig:thm1.1RIIIbmove(a)} by the addition or subtraction of $h(a)-h(b)$.  Similarly, the two middle area patches of $\Pi(\Lambda')$ will give area inequalities whose sum differs from that of the middle area patch in $\Pi(\Lambda)$ by $h(a)-h(b)$.

Thus, examining the area patches in $\Pi(\Lambda)$ and $\Pi(\Lambda')$, we see that any inequality given by an area patch in $\Pi(\Lambda)$ is a sum of inequalities from area patches in $\Pi(\Lambda')$, plus possibly the addition or the subtraction of $h(a)-h(b)$, which we assumed to be arbitrarily small.

Thus any valid height assignment for $\Pi(\Lambda')$ with $h(a) < h(b) + \delta$ induces a height assignment for $\Pi(\Lambda)$, up to shifting the heights of surrounding crossings by $\delta$ to compensate.  Because we have assumed $\delta>0$ to be significantly smaller than each of these surrounding heights, this shift presents no problem.

We now describe how the barcode changes through the entire move. First there is a stable tame isomorphism on the level of DGAs. Then there is a reassignment of heights. From \cite{chekanov2002differential} we know that the stable tame isomorphism consists of a stabilization followed by a composition of elementary automorphisms, each of which has the form $q\mapsto q + \sum w_i$, where each $w_i$ is a word in generators of height strictly smaller than $h(q)$. We are identifying this map as a semimonotonic automorphism, so that Lemma~\ref{lem:semimonotonicity} applies.  Our above discussion of height assignments ensures that the height reassignment will induce a shift of the heights of surrounding crossings by at most $h(a) - h(b) < \delta$.  Thus by Lemma~\ref{lem:sigma-2delta-interleaving} and Proposition~\ref{prop:2delta-interleaving}, this induces a $2\delta$-interleaving of the barcodes.

\subsection{The case of an inverse Reidemeister II move}

Recall that we wish to apply Proposition~\ref{prop:2delta-interleaving} to the case at hand. We consider this move separately from the standard Reidemeister II move since it requires a separate argument regarding the existence of height assignments before and after the move which allow us to apply Lemma~\ref{lem:sigma-2delta-interleaving}. However, the argument regarding the induced stable tame isomorphism follows directly from Section~\ref{sec:R2}. For any inverse Reidemeister II move, the standard Reidemeister move II which undoes it gives a stable tame isomorphism $\phi$ that induces a $2\delta$-interleaving. Thus the stable tame isomorphism for the inverse Reidemeister move II $\phi^{-1}$ also induces a $2\delta$-interleaving. The inverse of a Legendrian Reidemeister II move will remove a bigon from the knot diagram, as depicted in Figure~\ref{fig:thm1.1RII^{-1}move(b)}. 

We cannot simply run the preceding argument in reverse; so consider an arbitrary Lagrangian diagram of a Legendrian knot where the inverse of a Legendrian Reidemeister II can be performed. Label the crossings of the (aforementioned, pictured) bigon $a$ and $b$. The argument from Section~\ref{sec:R2} that the heights $h(a)$ and $h(b)$ can be made arbitrarily close to each other still applies, so we can assume that $h(b) - h(a) < \delta$ for some small $\delta$. Since the inverse Reidemeister move is possible, by \cite[Theorem 4.1]{kalman2005contact} we know that the area patch above the bigon $R$ containing the top arc $ab$ is positive, and its area can be assumed to be greater than $\delta$. See Figure~\ref{fig:thm1.1RII^{-1}move(a)} for an illustration of the bigon $R$. This area, and the area patch below $R$ containing the bottom arc $ab$, will only change by $\pm |h(a) - h(b)| \leq \delta$. Consequently any area inequality involving these generators will still be satisfied.

Additionally, that fact that a move II inverse can be done means there exists a height assignment satisfying the positivity of the {area patches $R_a$ and $R_b$}, while also keeping $a$ and $b$ within $\delta$ of each other. 

Starting with such a height assignment for Figure~\ref{fig:thm1.1RII^{-1}move(a)} we note that deleting $a$ and $b$ and keeping all other heights the same gives a valid height assignment for Figure~\ref{fig:thm1.1RII^{-1}move(c)}, since the area upper and lower area regions change by at most $\pm |h(a) - h(b)| \leq \delta$, and although we subtract $a$ from the inequality on the left, we add the remaining generators from the right area patch, which must add up to at least more than $h(a) - \delta$, meaning the area inequality is still satisfied.

Thus there exists a height assignment before and after the move such that the height of all generators apart from $a$ and $b$ remain unchanged. Hence, Lemma~\ref{lem:sigma-2delta-interleaving} and Proposition~\ref{prop:2delta-interleaving} imply a $2\delta$-interleaving of the barcodes that one computes before and after the Reidemeister move.

\begin{figure}
    \centering
\begin{subfigure}[t]{\textwidth}\vspace{1em}
    \centering
    \setcounter{subsubfigure}{0}
    \begin{subsubfigure}[t]{0.3\textwidth}
        \centering
        \includegraphics[scale=0.5]{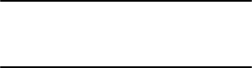}
        \caption{}
        \label{fig:thm1.1RIImove(a)}
    \end{subsubfigure} 
    \begin{subsubfigure}[t]{0.3\textwidth}
        \centering
        \includegraphics[scale=0.5]{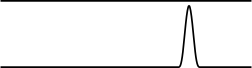}
        \caption{}
        \label{fig:thm1.1RIImove(b)}
    \end{subsubfigure} 
    \begin{subsubfigure}[t]{0.3\textwidth}
        \centering
        \includegraphics[scale=0.5]{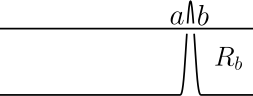}
        \caption{}
        \label{fig:thm1.1RIImove(c)}
    \end{subsubfigure} 
    \caption{Reidemeister II \\A very narrow `finger' is created in the lower strand, which crosses underneath the upper strand. The area of the bigon with endpoints $a$ and $b$, as depicted in Figure~\ref{fig:thm1.1RIImove(c)}, is arbitrarily small.}
    \label{fig:thm1.1RIImove}
\end{subfigure}

\begin{subfigure}[t]{\textwidth}\vspace{1em}
    \setcounter{subsubfigure}{0}
    \centering
    \begin{subsubfigure}[t]{0.3\textwidth}
        \centering
        \includegraphics[scale=0.5]{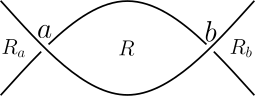}
        \caption{}
        \label{fig:thm1.1RII^{-1}move(a)}
    \end{subsubfigure} 
    \begin{subsubfigure}[t]{0.3\textwidth}
        \centering
        \includegraphics[scale=0.5]{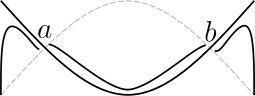}
        \caption{}
        \label{fig:thm1.1RII^{-1}move(b)}
    \end{subsubfigure} 
    \begin{subsubfigure}[t]{0.3\textwidth}
        \centering
        \includegraphics[scale=0.5]{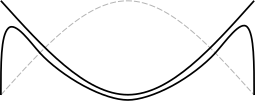}
        \caption{}
        \label{fig:thm1.1RII^{-1}move(c)}
    \end{subsubfigure} 
    \label{fig:thm1.1RII^{-1}move}
    \caption{Reidemeister II$^{-1}$\\The understrand is moved to position \ref{fig:thm1.1RII^{-1}move(b)} before performing the Reidemeister move. Once $h(b)-h(a)<\delta$, the Reidemeister move is performed ending in position \ref{fig:thm1.1RII^{-1}move(c)}.}
\end{subfigure}

\begin{subfigure}[t]{\textwidth}\vspace{1em}
    \setcounter{subsubfigure}{0}
    \centering
    \begin{subsubfigure}[t]{0.3\textwidth}
        \centering
        \includegraphics[scale=0.3]{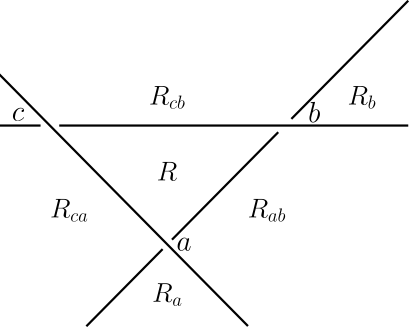}
        \caption{}
        \label{fig:thm1.1RIIIbmove(a)}
    \end{subsubfigure} 
    \begin{subsubfigure}[t]{0.3\textwidth}
        \centering
        \includegraphics[scale=0.3]{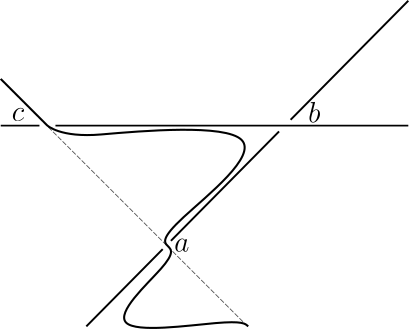}
        \caption{}
        \label{fig:thm1.1RIIIbmove(b)}
    \end{subsubfigure} 
    \begin{subsubfigure}[t]{0.3\textwidth}
        \centering
        \includegraphics[scale=0.3]{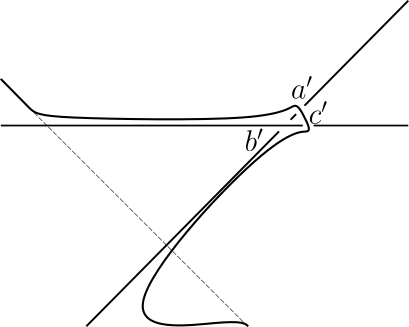}
        \caption{}
        \label{fig:thm1.1RIIIbmove(c)}
    \end{subsubfigure} 
    \caption{Reidemeister IIIb \\The strand connecting the crossings $c$ and $a$ in Figure~\ref{fig:thm1.1RIIIbmove(a)} is replaced with the new arc in Figure~\ref{fig:thm1.1RIIIbmove(c)}, which is sufficiently close to the old sides $cb$ and $ba$ so that the new heights $h(a')$ and $h(c')$ are arbitrarily close to $h(c)+h(b)$ and $h(c)$, respectively.}
    \label{fig:thm1.1RIIIbmove}
\end{subfigure}

    \caption{Reidemeister moves II, II$^{-1}$, and IIIb}
    \label{fig:enter-label}
\end{figure}

\subsection{The case of a Reidemeister IIIb move}\label{sec:R3b}

When a Reidemeister move IIIb is possible, there is a Legendrian isotopy realizing the planar isotopy from Figure~\ref{fig:thm1.1RIIIbmove(a)} to Figure~\ref{fig:thm1.1RIIIbmove(b)} by \cite{kalman2005contact}. Such an isotopy allows the area of the triangle $R$ to be made arbitrarily small; as a result $h(a)$ can be made arbitrarily close to $h(b)+h(c)$ while leaving $h(b)$ and $h(c)$ unchanged. For this reason, we assume that $h(a)>h(b)$ and $h(a)>h(c)$. When performing the Reidemeister move to go from Figure~\ref{fig:thm1.1RIIIbmove(b)} to Figure~\ref{fig:thm1.1RIIIbmove(c)}, we may satisfy the following three conditions.

\begin{itemize}
    \item The height $h(c')$ can be made arbitrarily close to $h(c)$ by Green's theorem,
    \item $h(b')=h(b)$, and
    \item $h(a')$ can be made arbitrarily close to $h(b)+h(c)$.
\end{itemize}

By \cite{kalman2005contact} we can take the area of the triangle before and after the move, respectively given by $h(b)+h(c)-h(a)$ and $h(b')+h(c')-h(a')$, to be less than $\delta/2 < 0$ and similarly take $|h(c)-h(c')| < \delta/2$. Thus we have
    \[-\delta/2<h(c')-h(c)-(h(a)-h(a'))<\delta/2.\]
    
Hence $|h(a')-h(a)|<\delta$. Thus we have a height assignment before and after the Reidemeister move IIIb such that the height of each generator changes by less than $\delta$ So by Lemma~\ref{lem:sigma-2delta-interleaving} it induces a $2\delta$-interleaving.

Note that the induced isomorphism of the Reidemeister move IIIb isotopy from $A_\Leg$ to $A_{\Leg'}$ is given by $a\mapsto a'+ \epsilon(c)b'+\epsilon(b)c'$ and $q\mapsto q'$ for $q\not=a$. But $h(a'+ \epsilon(c)b'+\epsilon(b)c')=h(a')$. Thus the isomorphism is a single semimonotonic automorphism and by Lemma~\ref{lem:semimonotonicity} this gives a $2\delta$-interleaving.
Therefore separating the Reidemeister move into two parts: a DGA isomorphism and a height reassignment, each give a $2\delta$-interleaving of chain complexes and so Proposition~\ref{prop:2delta-interleaving} we have a $2\delta$-interleaving of the barcodes.

\subsection{The case of a Reidemeister IIIa move}\label{sec:R3a}

When a Reidemeister move IIIa is possible, the change in height assignments is similar to move IIIb. The movement is the same as Figure~\ref{fig:thm1.1RIIIbmove}, except all of the crossing data is flipped. The induced isomorphism of the move IIIb isotopy is simply a relabeling $q \mapsto q'$, which is semimonotonic and thus by Lemma~\ref{lem:semimonotonicity} gives a $2\delta$-interleaving. Since the heights can change by at most $\delta$, it follows from Lemma~\ref{lem:sigma-2delta-interleaving} that the height reassignment is also $2\delta$-interleaving. Once again, by Proposition~\ref{prop:2delta-interleaving}, this induces a $2\delta$-interleaving of the barcodes that one computes before and after the Reidemeister move.

    \begin{figure}
    \label{fig:trefoilRII}
    \centering
    
    \begin{subfigure}[t]{0.4\textwidth}\vspace{1em}
        \centering
        \includegraphics[scale=0.35]{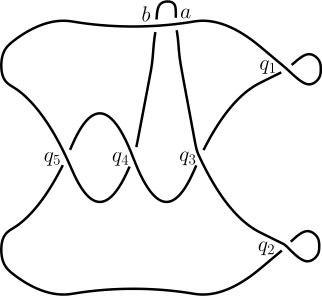}
        \caption{A Reidemeister move II has been performed on the Lagrangian diagram from Figure \ref{fig:toytrefoil(a)}. The two additional crossings are denoted by $a$ and $b$.}
        \label{fig:trefoilRII(a)}
    \end{subfigure}\hspace{0.05\textwidth}
    \begin{subfigure}[t]{0.4\textwidth}\vspace{1em}
        \centering
            \begin{tikzpicture}[xscale=1.65,yscale=2.3]
            \draw[-latex] (-.5,0) -- (3,0); \draw[dashed, lightgray] (0.5,0) -- (0.5,.5);
            \draw[dashed, lightgray] (0.5,0) -- (0.5,.8);
            \draw[dashed, lightgray] (1,0) -- (1,.8);
            \draw[dashed, lightgray] (1.3,0) -- (1.3,.8);
            \draw[dashed, lightgray] (2,0) -- (2,.9);
            \foreach \x in  {0,0.5,2}
            \node[blackpoint] at (\x,0){};
            \node[black] at (1,0){}; \node[black] at (1.3,0){};
            \node[textnode] at (0,-.2) {0};
            \node[textnode] at (0.5,-.2) {1};
            \node[textnode] at (1,-.2) {2};
            \node[textnode] at (1.3,-.2) {$2+\delta$};
            \node[textnode] at (2,-.2) {4};
            \node[textnode] at (2.9,-.2) {$h$};
            \draw[-stealth] (0.5,.1) node[blackpoint]{} node[left,scale=0.8]{$q_5$} -- (2.9,.1) {};
            \draw[-stealth] (0.5,.2) node[blackpoint]{} node[left,scale=0.8]{$q_4$} -- (2.9,.2) {};
            \draw (1,.4) node[blackpoint]{} node[left,scale=0.8]{$q_4+a$} -- (1.3,.4) node[whitepoint]{};
            \draw[-stealth] (2,.6) node[blackpoint]{} node[left,scale=0.8]{$q_1+q_2$} --  node[above]{$H_1$} (2.9,.6) {};
            \draw (0.5,.3) node[blackpoint]{} node[left,scale=0.8]{$q_3+q_5$} --  (2,.3) node[whitepoint] {};
            \draw (0.0,.05) -- ++(-.1,0) -- node[left]{$H_0$}  ++(0,.4) -- ++(.1,0);
            \end{tikzpicture}
        \caption{The barcode for the diagram in Fugure \ref{fig:trefoilRII(a)}. It also has one infinite bar in $H_1$, two infinite bars in $H_0$, and one finite bar in $H_0$ which is created at height 1 and destroyed at height 4, compared to Figure \ref{fig:toytrefoil(b)}; the only difference is an additional finite bar of length $\delta$.}
        \label{fig:trefoilRII(b)}
    \end{subfigure}    
    \caption{A diagram and its barcode for the Legendrian trefoil after a Reidemeister move II has been performed}
\end{figure}

\subsection{Returning to the trefoil example}
Previously in Section~\ref{sec:trefoilex} we computed the barcode of the standard Lagrangian projection of the trefoil. Here we compute  the barcode after a Redemeister move II had been performed.  Figure~\ref{fig:toytrefoil(b)} is the barcode before this move, and Figure~\ref{fig:trefoilRII(b)} is the barcode after this move. Note that the only difference is that a bar of length $\delta$ has been added to the post-move barcode. This means that our choice of height assignment led to $2\delta$ interleaved barcodes.
{The magnitude of the parameter $\delta$ is fixed by the area of the bigon in Figure~\ref{fig:trefoilRII(a)} that has corners at $a$ and $b$.}

\section{The flooding algorithm}\label{sec:flooding}

When compared to its unfiltered analogue, persistent Legendrian contact homology has two primary shortcomings:
\begin{enumerate}
    \item Persistent LCH is not a Legendrian isotopy invariant.
    \item The computation of persistent LCH requires more data than is contained in a Lagrangian diagram.
\end{enumerate}
In particular, while a Lagrangian projection diagram allows us to compute the grading of each generator of the Chekanov-Eliashberg DGA, such a diagram does not encode the DGA's filtration\footnote{The distinction between Lagrangian projection diagrams and Lagrangian projections is important here, as one \emph{can} recover the filtration from $\Pi(\Lambda)$.}.  Of course these shortcomings are related --- while a Lagrangian projection diagram does not determine a unique Legendrian knot, it determines a unique Legendrian isotopy type.

The effect of Legendrian isotopy on persistent LCH is the focus of Theorem~\ref{thm:main-invariance-result} above.  In order to compute persistent LCH directly from a Lagrangian diagram, we might hope to find a canonical assignment of heights to the crossings of the diagram.  For a Lagrangian diagram with $n$ crossings, the set of permissible height assignments $(h_1,\ldots,h_n)$ forms an open, unbounded cone in the first orthant of $\mathbb{R}^n$, with vertex at the origin.  By declaring a minimum allowable height, as well as a minimum allowable area for the area patches of the diagram, we obtain a closed, unbounded cone disjoint from the coordinate hyperplanes of $\mathbb{R}^n$.  A natural candidate for a Lagrangian diagram's canonical height assignment is given by the (unique) vertex of this cone which is nearest the origin, and we anticipate future work investigating the nature of persistent LCH using this height assignment.

In this section, however, we take up the more modest goal of producing \emph{some} --- not necessarily canonical --- height assignment via a short computation.  While a variety of efficient algorithms exist to solve the linear programming problem described in the above paragraph, these can be labor-intensive to carry out by hand.  We describe in this section our \emph{flooding algorithm}, which can be quickly applied by a human.  The algorithm does not succeed in assigning heights to \emph{every} Lagrangian diagram, but it has the advantage of halting for all Lagrangian diagrams.  Moreover, the collection of diagrams for which the algorithm assigns heights is quite large.  For instance, Ng showed in \cite{ng2003computable} how to start with a front projection $F(\Lambda)$ and produce a Lagrangian diagram for a knot Legendrian isotopic to $\Lambda$.  The algorithm described in this section successfully assigns heights to any Lagrangian diagram obtained by applying Ng's resolution.  Thus, when one wishes to produce a barcode from a Lagrangian diagram, the flooding algorithm often provides a quick means of doing so.

We present our algorithm in subsection~\ref{sec:flooding:algorithm} and give a recursive implementation in subsection~\ref{sec:flooding:recursive}.  In subsection~\ref{sec:flooding:diagrammatic} we explain how to carry out the algorithm on a Lagrangian diagram without explicitly writing down the related area inequalities.  We then use subsection~\ref{sec:flooding:heights} to demonstrate a choice of height assignment once the algorithm terminates, and~\ref{sec:flooding:termination} to show that the algorithm succeeds in assigning heights for any Lagrangian diagram obtained via Ng's resolution.  Finally, in subsection~\ref{sec:flooding-fails-example} we give an example of a Lagrangian diagram for which the flooding algorithm fails to assign heights.

\subsection{The algorithm}\label{sec:flooding:algorithm}
Recall from subsection~\ref{sec:background:filtering} that a Lagrangian diagram determines a collection of area inequalities; each inequality ensures that a patch in the diagram has positive area. Furthermore, we use the Legendrian  condition, (\ref{eq:LegCondition}), to convert the condition of positivity of area into an inequality on the sums of heights of each crossing involved in an area patch.  Suppose that the generators of $\mathcal{A}_\Lambda$  are labeled $q_1, q_2, \cdots, q_n$, and that
\[
\mathcal{R} = \{f_1,f_2,\ldots,f_m\}
\]
is the resulting set of height inequalities for the given Lagrangian diagram, with each $f_i$ taking the form
\[
\alpha_{i,1}h(q_1) + \alpha_{i,2}h(q_2) + \cdots + \alpha_{i,n}h(q_n) > 0,
\]
for some integers $\alpha_{i,j}\in\{-2,-1,0,1,2\}$.

Note that if some particular $h(q_j)$ has non-negative coefficient $\alpha_{i,j}$ in \emph{every} inequality $f_i$, then $h(q_i)$ may be taken to be arbitrarily large, and thus each inequality in which $h(q_i)$ has a positive coefficient may be solved --- regardless of the heights assigned to other generators.  This observation allows us to ignore inequalities in which $h(q_i)$ has a positive coefficient, leading us to consider a new, smaller set of inequalities in which $h(q_i)$ does not appear.  After solving this smaller set of inequalities, we may assign $h(q_i)$ a value as large as is necessary to solve the previously ignored inequalities.

In fact, we will apply the above reduction simultaneously to all heights which appear with only non-negative coefficients, considering the corresponding generators to be our first tier of generators.  We then repeat the process, defining a second tier of generators which appear with only non-negative coefficients in our smaller collection of inequalities.  The algorithm terminates if we can successfully assign every generator $q_j$ to a tier, at which time particular height values may be chosen.

\subsection{Recursive implementation}\label{sec:flooding:recursive}

The flooding algorithm is implemented in a recursive manner as follows. We start with the collection $\mathcal{R'} = \mathcal{R}$ of inequalities, defined above.
\begin{enumerate}
    \item Let $k=1$.
    \item Set $T_k = \left\{q_j \mid \alpha_{i,j} \geq 0 \text{ in every inequality }  f_i \in \mathcal{R'} \right\}$.\label{step:declare-tier}
    \item If $T_k=\emptyset$, stop; and in this case the algorithm is said to fail.  Otherwise, for each $q_j \in T_k$, remove $f_i$ from $\mathcal{R'}$, for every $i$ where $\alpha_{i,j} >0$.\label{step:remove-inequalities}
    \item If $\mathcal{R}'\neq\emptyset$, increase $k$ by $1$ and return to Step~\ref{step:declare-tier}.  If $\mathcal{R}'=\emptyset$, define
    \[
    T_{k+1} = \{ q_i \mid q_i \notin T_\ell,  \,\text{for all}\, \ell = 1, \ldots , k \}
    \]
    and let $M=k+1$ be the total number of tiers.
\end{enumerate}
Because the set of generators $\{q_1,\ldots,q_n\}$ and the set of inequalities $\mathcal{R}$ are  both finite, this recursive process will terminate.  Provided the algorithm does not fail, the result will be a partition of the set of generators into $M$ disjoint `tiers' that we called $T_k$:
\[ T_1\sqcup \cdots \sqcup T_{M} = \{q_1,\ldots,q_n\}.\]

\subsection{Diagrammatic implementation}\label{sec:flooding:diagrammatic}

The algorithm above could be applied to any collection of linear inequalities, so let us describe the algorithm in terms of Lagrangian diagrams, motivating its name.  Each inequality in our set $\mathcal{R}$ corresponds to an area patch of our Lagrangian diagram, and our algorithm begins with none of these area patches having been `flooded' --- though we declare the unbounded exterior region of our Lagrangian diagram to be flooded.

Now recall from Section~\ref{sec:background} that each crossing $q_i$ has two quadrants with positive Reeb sign and two quadrants with negative Reeb sign.  At step $k$ of our algorithm, we add to tier $T_k$ all those crossings whose negative quadrants both lie in flooded regions, and we then flood all regions for which these crossings serve as vertices.  The flooding step corresponds to removing the inequalities from our list, and the algorithm will succeed in producing a height assignment if all area patches are eventually flooded.  See Figure~\ref{fig:island-knot} for an example of this diagrammatic implementation where the algorithm halts without assigning all crossings to tiers.

\subsection{Obtaining a valid height assignment from the flooding algorithm}\label{sec:flooding:heights}
As written, the algorithm above does not in fact assign heights to the crossings of our Lagrangian diagram, but only sorts them into tiers.  The point of these tiers is that, once heights have been assigned to all crossings in tiers $T_{k+1},\ldots,T_M$, we may satisfy the inequalities of $\mathcal{R}$ by assigning sufficiently large heights to the crossings in $T_k$.  For the sake of completeness, we now give an explicit assignment of heights in this manner.

Let $T_1,\ldots,T_M$ be the tiers produced by the flooding algorithm; we denote by $|T_i|$ the cardinality of $T_i$.  We define integers $h_1,\ldots,h_M$ recursively by declaring $h_M=1$ and then
\begin{equation}\label{eq:height-assignment}
h_k = 1+\sum_{i=k+1}^M 2h_i\,|T_i|,
\quad\text{for~}k=M-1,M-2,\ldots,1.
\end{equation}
Finally, we assign height $h_k$ to each crossing in tier $T_k$, for $1\leq k\leq M$.

\begin{proposition}
Declaring each crossing in tier $T_k$ to have height $h_k$, for $1\leq k\leq M$, gives a valid height assignment for the Lagrangian diagram.
\end{proposition}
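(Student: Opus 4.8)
The plan is to verify directly that, under the assignment $h(q_j)=h_k$ whenever $q_j\in T_k$, every inequality $f_i\in\mathcal{R}$ evaluates to a strictly positive number. Since we assume the flooding algorithm does not fail, the set $\mathcal{R}'$ is eventually emptied, so each $f_i$ is discarded during some execution of Step~\ref{step:remove-inequalities}; write $k(i)\in\{1,\dots,M-1\}$ for the step at which $f_i$ is removed. I would first record the trivialities that $h_M=1>0$ and, by downward induction using~(\ref{eq:height-assignment}), that $h_k>0$ for every $k$ (each $h_k$ is $1$ plus a sum of nonnegative terms); this is what lets us ignore the positive-coefficient terms of $f_i$ when bounding from below, since they contribute nonnegatively.

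The crux is the following structural observation about tiers, which I would prove by tracking the contents of $\mathcal{R}'$ through the recursion. Fix $f_i$ and put $k:=k(i)$. \textbf{(a)} There is an index $j$ with $\alpha_{i,j}>0$ and $q_j\in T_k$ — this is exactly the condition in Step~\ref{step:remove-inequalities} that causes $f_i$ to be deleted at step $k$. \textbf{(b)} Every index $\ell$ with $\alpha_{i,\ell}<0$ has $q_\ell\in T_{k'}$ for some $k'>k$. To see (b), note that $f_i$ belongs to $\mathcal{R}'$ at the \emph{start} of each of the steps $1,\dots,k$ (it is only removed at step $k$), and by the rule in Step~\ref{step:declare-tier} a crossing enters $T_{k'}$ only if its coefficient is nonnegative in \emph{every} inequality still present in $\mathcal{R}'$; hence $q_\ell$, which has a negative coefficient in $f_i$, cannot lie in any of $T_1,\dots,T_k$. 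Since the algorithm's success means $T_1\sqcup\cdots\sqcup T_M$ exhausts all generators, $q_\ell$ must lie in some later tier.

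With (a) and (b) established, the estimate is immediate. Evaluating the left side of $f_i$ at the assignment, the terms with $\alpha_{i,j}>0$ contribute at least $h_k$ by (a) together with positivity of all heights, while the terms with $\alpha_{i,\ell}<0$ contribute, using $\alpha_{i,\ell}\geq -2$ and (b),
\[
\sum_{\ell:\,\alpha_{i,\ell}<0}\alpha_{i,\ell}\,h(q_\ell)\ \geq\ -2\sum_{k'=k+1}^{M}h_{k'}\,|T_{k'}|\ =\ -(h_k-1),
\]
where the final equality is precisely the defining recursion~(\ref{eq:height-assignment}). Summing the two contributions shows the left side of $f_i$ is at least $h_k-(h_k-1)=1>0$, so $f_i$ is satisfied; as $f_i\in\mathcal{R}$ was arbitrary, the assignment is valid.

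I expect the only genuinely delicate point to be part (b) of the structural observation — specifically, being careful that "$f_i$ is removed at step $k(i)$" really does force $f_i$ to be present in $\mathcal{R}'$ throughout steps $1,\dots,k(i)$, so that the membership test in Step~\ref{step:declare-tier} genuinely excludes the negatively-weighted crossings from the tiers $T_1,\dots,T_{k(i)}$. Once that bookkeeping is pinned down, the remainder is the one-line telescoping estimate above against the definition of the $h_k$.
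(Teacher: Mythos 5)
Your proof is correct and follows essentially the same route as the paper's: both arguments rest on the facts that each inequality has a positive-coefficient crossing in the tier at which it is removed while all its negative-coefficient crossings lie in strictly later tiers, and both close with the same telescoping estimate $2\sum_{k'>k}h_{k'}|T_{k'}|=h_k-1<h_k$ against the recursion~(\ref{eq:height-assignment}). The only difference is organizational — you argue per inequality and spell out explicitly the bookkeeping (your observations (a) and (b)) that the paper's per-crossing induction asserts with the phrase ``our construction of the tiers ensures.''
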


\begin{proof} 
Consider a crossing $q_j$ in the tier $T_k$.  Because the crossings in tiers $T_1,\ldots,T_{k-1}$ will see their heights assigned after that of $q_j$, we need only concern ourselves with inequalities involving crossings from tiers $T_{k+1},\ldots,T_M$ when assigning the height $h(q_j)$.  Our construction of the tiers ensures that $h(q_j)$ appears with non-negative coefficient in any such inequality.  Indeed, consider an inequality
\[
\alpha_{i,1}h(q_1) + \alpha_{i,2}h(q_2) + \cdots + \alpha_{i,n}h(q_n) > 0
\]
with the property that $\alpha_{i,j}>0$ and $\alpha_{i,\ell}=0$ whenever $q_\ell$ is a crossing in $T_1\sqcup\cdots\sqcup T_{k-1}$.  The number of negative terms in the left hand side of this inequality is bounded above by $|T_{k+1}|+\cdots+|T_M|$, and each coefficient $\alpha_{i,\ell}$ satisfies $|\alpha_{i,\ell}|\leq 2$.  The assignment made in Equation~\ref{eq:height-assignment} then ensures that this inequality is satisfied.  In this manner we see that all inequalities involving the crossings of $T_k\sqcup\cdots\sqcup T_M$ are satisfied and, continuing inductively, that all inequalities of $\mathcal{R}$ are satisfied by our height assignment.
\end{proof}

\begin{remark}
If any of our tiers contain more than one crossing, this height assignment will be resonant, in that distinct generators of our DGA will have the same filtration value.  This can be easily overcome with a small perturbation.
\end{remark}

\subsection{Proof of termination}\label{sec:flooding:termination}
As discussed, the flooding algorithm does not always succeed in assigning heights to the crossings of a Lagrangian diagram.  In particular, if $\mathcal{R}$ is a collection of inequalities in the variables $h(q_1),\ldots,h(q_k)$ in which each variable appears at least once with a negative coefficient, then we are unable to extract a first tier of `free variables' from $\mathcal{R}$.  However, we now show that any Lagrangian diagram obtained by applying Ng's resolution to a front diagram will yield a set of area inequalities for which our algorithm terminates.

Let us fix a front diagram on which Ng's resolution will be applied and denote by $\mathcal{R}$ the collection of area inequalities corresponding to the resulting Lagrangian diagram.  In the front diagram, we enumerate the right cusps and crossings right-to-left as $c_1,\ldots,c_n$, so that
\[
x(c_1) > x(c_2) > \cdots > x(c_n).
\]
Obtaining distinct $x$-coordinates for these generators may require a small planar isotopy, but this can be done without modifying $\mathcal{R}$.

We now show by strong induction on $k$ that each generator $c_k$ will be assigned by our algorithm to some tier of generators.  Our base case is $c_1$, which must be a right cusp outside of this diagram.  Applying Ng's resolution turns this generator into a crossing $q_1$ whose negative quadrants both lie in the region outside the Lagrangian diagram, and thus are flooded when our algorithm commences.  That is, the $h(q_1)$ will not have negative coefficient in any inequality of $\mathcal{R}$, and it follows that our algorithm will assign $q_1$ to the first tier of crossings.

For our inductive step, we claim that if all generators $c_i$ are added to a tier by our algorithm, for $1\leq i<k$, then $c_k$ is added to a tier by our algorithm.  Indeed, regardless of whether $c_k$ is a right cusp or a crossing, Ng's resolution will produce a crossing in the Lagrangian diagram whose topmost and bottommost quadrants have negative Reeb signs.  Moreover, each of these quadrants will either lie outside of the Lagrangian diagram or lie in an area patch which meets a crossing $c_i$, $1\leq i<k$.  In either case, the quadrant lies in an area patch which will be flooded by our algorithm, and thus our algorithm will assign $c_k$ to a tier.

Inductively, we see that each generator $c_1,\ldots,c_n$ is assigned by our algorithm to a tier, and thus the algorithm terminates.

\subsection{An example where the flooding algorithm fails}\label{sec:flooding-fails-example}

\begin{figure}
    \centering
    \includegraphics[scale=0.85]{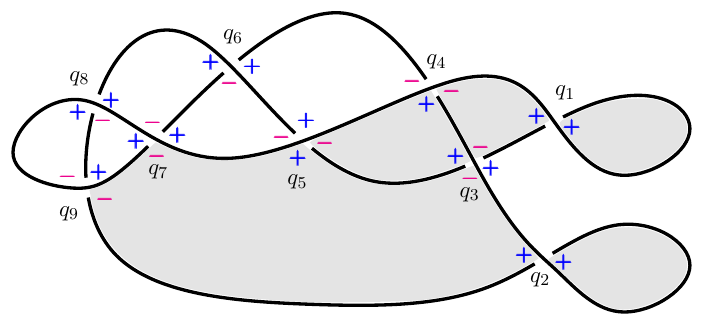}
    \caption{Example of a Lagrangian diagram for which the flooding algorithm fails to terminate. 
    }
    \label{fig:island-knot}
\end{figure}

For Lagrangian diagrams which are not obtained via Ng's resolution, the flooding algorithm is not guaranteed to succeed in assigning heights to crossings. (The algorithm still halts, however.)  Indeed, the Lagrangian diagram depicted in Figure~\ref{fig:island-knot} has area inequalities given by
\begin{align*}
h_1 &> 0 & h_3 + h_4 - h_5 &> 0 & h_9 + h_7 - h_8 &> 0 \\
h_2 &> 0 & h_5 + h_6 - h_4 &> 0 & h_8 - h_9 &> 0 \\
h_1 - h_4 - h_3 &> 0 & h_7 - h_6 - h_5 &> 0 & h_8 + h_6 - h_7 &> 0 \\
h_2 - h_3 + h_5 - h_7 - h_9 &> 0 & 
\end{align*}
Here we have written $h_i$ for $h(q_i)$.  The flooding algorithm succeeds in creating two tiers of crossings --- $T_1=\{q_1,q_2\}$ and $T_2=\{q_3\}$ --- after which our collection of inequalities is
\begin{align*}
h_5 + h_6 - h_4 &> 0 & h_8 + h_6 - h_7 &> 0 & h_8 - h_9 &> 0 \\
h_7 - h_6 - h_5 &> 0 & h_9 + h_7 - h_8 &> 0 & 
\end{align*}
Because each height appears with negative coefficient at least once, the flooding algorithm halts without successfully assigning every crossing to a tier.

This third stage of the flooding algorithm is depicted in Figure~\ref{fig:island-knot}, where shading indicates those area patches which have been flooded.  Because every crossing with unflooded quadrants has at least one \emph{negative} unflooded quadrant, we cannot flood any more area patches, and the algorithm halts.  We think of the regions which share the vertices $q_5$ and $q_6$ as `islands' which we are unable to flood, and any Lagrangian diagram with such a region will cause the flooding algorithm to fail.

\section{Strong Morse inequalities}\label{sec:morse-inequalities}

As the reader can well see, the computation of Legendrian contact homology involves associating a Morse chain complex to the critical points of the Legendrian action functional, Equation~\ref{eq:height}. In this section, we push the Morse theory further, and develop a relationship between two polynomials in Theorem~\ref{thm:strong-morse-inequality}. This relationship will be called the strong Morse inequalities in the context of persistent LCH.

The \textit{Morse-Chekanov polynomial} of a Legendrian knot $\Leg$ is defined to be  the Laurent polynomial
\begin{align}\label{MCpoly}
MC(z)=\sum_{k=-\infty}^\infty \#\{\text{Reeb chords of $\Leg$ in grading } k\} z^k.\end{align}

Similarly, given an augmentation $\epsilon:\AlgL\rightarrow \Z_2$, the \textit{Poincar\'e-Chekanov polynomial} is defined as the Laurent polynomial
\begin{align}\label{PCpoly}
PC^\epsilon(z)=\sum_{k=-\infty}^\infty\dim_{\Z_2}\left(\LCH_k^\epsilon(\Leg)\right) z^k.\end{align}

\begin{theorem}[Strong Morse Inequality]
    \label{thm:strong-morse-inequality}
    Given an augmentation $\epsilon:(\AlgL,\dL)\rightarrow (\Z_2,0)$, we have that
        \[MC(z)-PC^\epsilon(z)=(z+1)R(z)\]
    where $R(z)=\sum_k \#\{\text{Finite bars in } F^\bullet \LCH^\epsilon_k(\Leg)\} z^k.$
\end{theorem}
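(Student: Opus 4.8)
\emph{Proof proposal.} The plan is to deduce the identity from a single decomposition of the linearized filtered chain complex $(A_\Leg^\bullet,\partial_1^\epsilon)$ into ``bar summands.'' First I would record what the two outer polynomials mean at the chain level. The Reeb chords of $\Leg$ are exactly the generators $q_1,\dots,q_n$ of $\AlgL$, and the grading of a Reeb chord is the Maslov degree of the corresponding generator, so
$MC(z)=\sum_k \dim_{\Z_2}(A_\Leg)_k\, z^k$, independently of $\epsilon$ and of the height assignment. On the other hand $PC^\epsilon(z)=\sum_k \dim_{\Z_2} H_k(A_\Leg,\partial_1^\epsilon)\,z^k$, and for every $\ell$ larger than all $h(q_i)$ one has $F^{\ell}\LCH^\epsilon_k(\Leg)=H_k(A_\Leg,\partial_1^\epsilon)$, so $\dim_{\Z_2} H_k(A_\Leg,\partial_1^\epsilon)$ is precisely the number of \emph{infinite} bars of $F^\bullet\LCH^\epsilon_k(\Leg)$.

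Next I would invoke the structure theorem (Theorem~\ref{thm:decomposition}; in its chain-level form for filtered complexes over a field this is the normal form of \cite{polterovic2017persistence}): the filtered complex $(A_\Leg^\bullet,\partial_1^\epsilon)$ admits a barcode basis, i.e.\ a basis adapted to the height filtration in which $\partial_1^\epsilon$ sends each basis vector either to $0$ or to another basis vector. This partitions the basis into three types: ``infinite-bar'' vectors $z$ with $\partial_1^\epsilon z=0$ representing a nonzero persistent class; ``death'' vectors $x$ with $\partial_1^\epsilon x=y\neq 0$; and ``birth'' vectors $y$ occurring as $\partial_1^\epsilon x$ for such an $x$. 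Each $z$ yields the infinite bar $\Z_2[h(z),\infty)$ in grading $|z|$, and each death--birth pair $(x,y)$ spans a two-generator acyclic summand yielding the finite bar $\Z_2[h(y),h(x))$ in grading $|y|$. Here I use two structural inputs: the LCH differential has degree $-1$, so $|x|=|y|+1$ and the pair occupies adjacent Maslov gradings; and the area inequalities force $h(x)>h(y)$ whenever $y$ occurs in $\partial_1^\epsilon x$ (each contributing disk has strictly positive area), so every finite bar has positive length and is genuine. Consequently $R(z)=\sum_k \big(\#\text{death vectors in grading }k{+}1\big)\,z^k=\sum_k \rank\bigl(\partial_1^\epsilon\colon (A_\Leg)_{k+1}\to(A_\Leg)_k\bigr)\,z^k$.

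Finally I would compare contributions summand by summand. An infinite-bar summand in grading $k$ contributes $z^k$ to $MC$, $z^k$ to $PC^\epsilon$, and $0$ to $R$, giving $z^k-z^k=(z+1)\cdot 0$; a finite-bar summand whose birth vector lies in grading $k$ contributes $z^{k+1}+z^k$ to $MC$, $0$ to $PC^\epsilon$ (the summand is acyclic), and $z^k$ to $R$, giving $z^{k+1}+z^k=(z+1)z^k$. Summing over all summands yields $MC(z)-PC^\epsilon(z)=(z+1)R(z)$. As a cross-check one may instead argue by graded rank--nullity: with $r_k=\rank(\partial_1^\epsilon\colon (A_\Leg)_k\to(A_\Leg)_{k-1})$ one has $\dim_{\Z_2}(A_\Leg)_k=\dim_{\Z_2}H_k+r_{k+1}+r_k$, so $MC(z)-PC^\epsilon(z)=\sum_k(r_{k+1}+r_k)z^k=(1+z)\sum_k r_{k+1}z^k=(1+z)R(z)$.

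The step I expect to require the most care is the middle paragraph: making the chain-level decomposition precise and checking that the structure theorem applies compatibly with both the $\Z$-grading (Maslov) and the $\R$-filtration (height), so that each finite bar is unambiguously assigned to the grading of its birth vector and so that strictness of the area inequalities rules out zero-length ``bars.'' Once that bookkeeping is in place, the comparison of the three polynomials is the elementary computation above.
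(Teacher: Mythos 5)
Your proposal is correct, and it actually contains two proofs: your ``cross-check'' via graded rank--nullity is precisely the paper's argument, while your primary route packages the same content more structurally. The paper computes directly that the coefficient of $z^k$ in $MC(z)-PC^\epsilon(z)$ equals $\rank(\deps_{1,k})+\rank(\deps_{1,k+1})$, factors out $(z+1)$, and then identifies $\rank(\deps_{1,k})$ with the number of finite bars in $F^\bullet\LCH^\epsilon_{k-1}(\Leg)$, using only the remark that $\deps$ is strictly height-decreasing so that no class is born and dies at the same filtration level. Your main argument instead first fixes a barcode basis for the filtered complex $(A_\Leg^\bullet,\partial_1^\epsilon)$ and compares the three polynomials summand by summand; this buys a cleaner conceptual picture (each finite bar visibly contributes $z^{k+1}+z^k$ to $MC$ and $z^k$ to $R$, each infinite bar cancels between $MC$ and $PC^\epsilon$) at the cost of invoking the chain-level normal form, which is a stronger statement than the homology-level decomposition of Theorem~\ref{thm:decomposition} that the paper states --- though it is standard over a field and you correctly flag it as the step needing care. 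Both arguments hinge on the same key point, namely that positivity of the areas of the contributing disks forces $h(x)>h(y)$ whenever $y$ appears in $\partial_1^\epsilon x$, so that $\rank(\deps_{1,k})$ genuinely counts finite bars of positive length; you and the paper both address this, and your gradings are consistent with the paper's convention that a rank in degree $k$ counts finite bars of $F^\bullet\LCH^\epsilon_{k-1}(\Leg)$.
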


\begin{proof}
    By the rank-nullity theorem, we have that
        \begin{align*}
            MC(z)-PC^\epsilon(z)&=\sum_{k=-\infty}^\infty (\nul(\deps_{1,k})+\rank(\deps_{1,k})-\nul(\deps_{1,k})+\rank(\deps_{1,k+1}))z^k\\
                                &=\sum_{k=-\infty}^\infty (\rank(\deps_{1,k})+\rank(\deps_{1,k+1}))z^k\\
                                &=\sum_{k=-\infty}^\infty \rank(\deps_{1,k})z^k+\rank(\deps_{1,k})z^{k-1}\\
                                &=\sum_{k=-\infty}^\infty \rank(\deps_{1,k})(z+1)z^{k-1}\\
                                &=(z+1)\left(\sum_{k=-\infty}^\infty \rank(\deps_{1,k})z^{k-1}\right).\\
        \end{align*}
    Since $\deps$ is strictly height decreasing it follows that no homology class can be born and die simultaneously. This shows that $\rank(\deps_{1,k})$ is the number of finite bars in $F^\bullet \LCH^\epsilon_{k-1}(\Leg)$ proving the claim.
\end{proof}

From this theorem, one can see that some aspects of persistence are accessible from the standard Legendrian contact homology. However, the data coming from the actual Reeb chord heights and the generators of the finite bars in $F^\bullet \LCH^\epsilon_\ast(\Leg)$ is absent in the above result. Similar to how the normal Poincar\'e polynomial doesn't recover all the information given by (singular) homology, $R(z)$ is not enough to recover the data given by persistent Legendrian contact homology.

\begin{remark} 
    As $t\in \R$ varies, generators of $A_\Leg$ are born at $A_\Leg^t$ generating an infinite bar of $A_\Leg^\bullet$ in degree $k$ and correspondingly contributing a $z^k$ to $MC(z)$. Some of these infinite bars will correspond to infinite bars in $F^\bullet \LCH^\epsilon_k(\Leg)$ and contributing a $z^k$ to $PC^\epsilon(z)$. Left over are infinite bars in $A_\Leg^\bullet$ that correspond to killing a sum of generators of smaller height corresponding to a single finite bar in $F^\bullet \LCH^\epsilon_\ast(\Leg)$ and contributing a factor of $t^k+t^{k-1}$ to $MC(z)-PC^\epsilon(z)$. Overall, the difference of the number of generators in $A_\Leg$ and the number of generators in $\LCH^\epsilon(\Leg)$ contributes twice the number of finite bars in $F^\bullet \LCH^\epsilon(\Leg)$, and this agrees with Theorem \ref{thm:strong-morse-inequality} by setting $z=1$ and noticing that $R(1)=\frac{1}{2}(MC(1)-PC^\epsilon(1))$.
\end{remark}

\begin{example}
    We can now compute the Morse-Chekanov and the Poincar{\'e}-Chekanov polynomials for the examples given in  section~\ref{sec:toys}. We find that
        \[MC(z)-PC^\epsilon(z)=0=(z+1)\cdot 0,\]
    in the case of the unknot, and 
        \[MC(z)-PC^\epsilon(z)=z+1=(z+1)\cdot 1,\]
    in the case of the trefoil. The reader should compare these differences to the finite bars found in the  barcodes of  Figure~\ref{fig:toyunknot(b)} and Figure~\ref{fig:toytrefoil(b)}, respectively.
\end{example}

\subsection*{Acknowledgments}
This project was initiated as part of a Research Experience for Undergraduates in the School of Mathematics at Georgia Institute of Technology.  The REU was supported by Georgia Tech's College of Sciences and NSF grants \#1745583, \#1851843, \#2244427, and arranged by Igor Belegradek.  The authors are grateful to all who made this REU possible.  DI was additionally supported by  the NSF-Simons Southeast Center for Mathematics and Biology (SCMB) through the grants National Science Foundation DMS1764406 and Simons Foundation/SFARI 594594, and would like to thank Orsola Capovilla-Searle and Ziva Myer for many helpful conversations. WS was partially supported by NSF grant DMS-2104144 and would like to thank Jennifer Hom for that.

\bibliographystyle{alpha}
\bibliography{references}
\end{document}